\numberwithin{equation}{section}
\newtheorem{thm}{\bf Theorem}[section]
\newtheorem{lem}[thm]{\bf Lemma}
\newtheorem{prop}[thm]{\bf Proposition}
\newtheorem{defn}{\bf Definition}[section]
\theoremstyle{remark}
\newtheorem{rem}{\bf Remark}[section]
\begin{document}

\title{\large\bf GEOMETRICAL DISSIPATION FOR DYNAMICAL SYSTEMS}
\author{Petre Birtea* and Dan Com\u{a}nescu*\\
{\small *Department of Mathematics, West University of Timi\c soara}\\
{\small Bd. V. P\^ arvan, No 4, 300223 Timi\c soara, Rom\^ ania}\\
{\small birtea@math.uvt.ro, comanescu@math.uvt.ro}\\
}
\date{ }
\maketitle

\begin{abstract}
On a Riemannian manifold $(M,g)$ we consider the $k+1$ functions $F_1,...,F_k,G$ and construct the vector fields that conserve $F_1,...,F_k$ and dissipate $G$ with a prescribed rate. We study the geometry of these vector fields and prove that they are of gradient type on regular leaves corresponding to $F_1,...,F_k$. By using these constructions we show that the cubic Morrison dissipation and the Landau-Lifschitz equation can be formulated in a unitary form.
\end{abstract}

{\bf MSC}: 37C10, 58A10, 53B21, 70E20.

{\bf Keywords}: dissipative systems, exterior algebra, gradient systems, orthogonal projection, metriplectic dissipation, Landau-Lifschitz equation.

\section{Introduction}

An isolated dynamical system is a conservative system in the sense that there exist certain conserved quantities. Such systems are often described in a Hamilton-Poisson setting for which the energy and the Casimir functions are conserved quantities. In real life certain dynamical parameters are not conserved due to the fact that the systems are not completely isolated. Another instance when we have a dissipative behavior of some parameters is when control terms are added. For a large class of dissipative terms various mathematical formulations have been found.

Starting from the notion of Poisson bracket, a dissipative bracket was introduced in the study of dissipative Hamilton-Poisson systems by M. Grmela \cite{grmela-84}, A. Kaufman \cite{kaufman-84}, and P.J. Morrison \cite{morrison-86}. In \cite{morrison-86}, P.J. Morrison coined the notion of metriplectic
systems which are Hamilton-Poisson systems that are perturbed with a dissipation of
metric type. Dissipative terms and their implications
for dynamics have also been studied in connection to various dynamical
systems derived from mathematical physics, see \cite{birtea-comanescu}, \cite{bloch-marsden-ratiu}, \cite{grmela-86}, \cite{kaufman-85}, \cite{kaufman-morrison}, \cite{kaufman-turski}, \cite{morrison-84}.

Another type of dissipation which is called double bracket dissipation, was introduced by Brockett, see \cite{brockett-91} and \cite{brockett-93}. The double bracket equation is defined in a Lie algebra setting and has an important role in the study of various completely integrable systems, see \cite{bloch-90}, \cite{bloch-brockett-crouch}, \cite{bloch-brockett-ratiu}, \cite{bloch-flaschka-ratiu}. It has been shown that this flow is a gradient flow on the
adjoint orbits, see \cite{bloch-brockett-ratiu}. The metric considered is the "standard" or "normal" metric, see \cite{atiyah} and \cite{besse}. A particular example of such dissipation is given by Landau-Lifschitz equation.

We show in the current paper that the cubic Morrison dissipation and the Landau-Lifschitz equation can be formulated in an unitary form. In Section 2 we start with the $k+1$ functions $F_1,...,F_k,G$ and construct the vector fields that conserve $F_1,...,F_k$ and the function $G$ oscillates along these vector fields with a prescribed rate. We prove that all these control vector fields are generated by a vector field that we will call the standard control vector field. We apply this construction to the case when we have a dynamical system which has $k+1$ conserved quantities. We construct a perturbation which dissipates one of the conserved quantities and conserves the remanning $k$ of them.

In the Euclidean case such a dissipation was constructed in \cite{indian} using exterior algebra. In Section 3 we extend this construction to a general Riemannian manifold and moreover, we prove that this dissipation is minus the standard control vector field constructed in Section 2. This generalization allows the study of dissipative models which have as phase space a general Riemannian manifold.

In Section 4 we study the geometry of the standard control vector field. In analogy with the case of double bracket dissipation, see \cite{bloch-brockett-ratiu}, we prove that when restricted to regular leaves of the function $\mathbf{F}=(F_1,...,F_k)$, the standard control vector field has a gradient formulation. On every regular leaf we construct a certain Riemannian metric such that the restricted standard control vector field is a gradient vector field with respect to this metric. First we construct a symmetric contravariant 2-tensor that is degenerate on the phase space. We prove that this tensor is in fact nondegenerate when restricted to regular leaves. The Riemannian metric on the regular leaves will be the inverse of the restriction of this tensor. Moreover, we prove that this metric is a conformal metric with the first fundamental form of the regular leaves.

In Section 5 we prove that the standard control vector field can also be written as a scaled orthogonal projection on the regular leaves of the gradient vector field $\nabla G$. Using this formulation, we study the connection between the standard control vector field generated by sets of functionally dependent conserved quantities.

In Section 6 we prove that the Landau-Lifschitz equation can be regarded as a perturbed system with the perturbation being a standard control vector field. This perturbation was formulated before as a double bracket dissipation, see \cite{marsden-92}, \cite{bloch-krishnaprasad-marsden-ratiu}, \cite{ortega-planas}.
We will also show that the cubic dissipation of the rigid body introduced by Morrison \cite{morrison-86} is again of the form given by a standard control vector field. Both systems can be described as perturbed Hamilton-Poisson systems on the Lie algebra $so(3)$. Double bracket dissipation is obtained by conserving the Casimir function and dissipating the Hamiltonian function. Morrison dissipation is obtained by conserving the Hamiltonian function and dissipating the Casimir function.

Various formulas and notations that are used throughout this paper are listed in the Appendix.

\section{Construction of the dissipation}

Having the $k+1$ functions $F_1,...,F_k,G$, we construct in this section a family of vector fields that conserves $F_1,...,F_k$ and the function $G$ oscillates along these vector fields with a prescribed rate.

Let $(M,g)$  be a Riemannian manifold and $F_1,...,F_k,G:M\rightarrow \mathbb{R}$ be $k+1$ smooth functions. We construct a vector field $\mathbf{u}\in \mathcal{X}(M)$ that conserves $F_1,...,F_k$ and dissipates $G$.
A function $F_{\alpha}$ is a conserved quantity if and only if
$$0=L_XF_{\alpha}=\frac{\partial F_{\alpha}}{\partial
x^i}X^i=\delta_{ia}\frac{\partial F_{\alpha}}{\partial
x^a}X^i=g_{ij}g^{aj}\frac{\partial F_{\alpha}}{\partial
x^a}X^i=g(X^i\frac{\partial}{\partial x^i},g^{aj}\frac{\partial
F_{\alpha}}{\partial x^a}\frac{\partial}{\partial x^j})=<X,\nabla
F_{\alpha}>,$$
where $<\cdot,\cdot>$ is the scalar product
generated by the metric $g$ and $\nabla
F_{\alpha}=g^{aj}\frac{\partial F_{\alpha}}{\partial
x^a}\frac{\partial}{\partial x^j}$ is the gradient vector field
generated by the function $F_{\alpha}$ on the Riemannian manifold
$(M,g)$.
The function $G$ oscillates along the solutions of the vector field $\mathbf{u}$ after the rule
\begin{equation*}
\frac{dG}{dt}(x(t,x_0))=h(x(t,x_0)),
\end{equation*}
where the rate of dissipation $h:M\rightarrow \mathbb{R}$ is a smooth function and $x(\cdot, x_0)$ is the solution of the system $\dot{x}=\mathbf{u}(x)$ with the initial condition $x_0$. We can write the above statements equivalently
\begin{equation}\label{problema}
    \left\{%
\begin{array}{ll}
    <\mathbf{u},\nabla F_{1}>=0 \\
   \,\,\,\,\,\,\,\,\, ... \\
   <\mathbf{u},\nabla F_{k}>=0 \\
   <\mathbf{u},\nabla G_{\,}>=h. \\
\end{array}%
\right.
\end{equation}

For $x\in M$, we can decompose the vector $\mathbf{u}(x)=\mathbf{v}(x)+\mathbf{w}(x)$, where
$\mathbf{v}(x)\in Sp[\nabla F_1(x),...,\nabla F_k(x),\nabla G(x)]$ is the dissipative part and
$\mathbf{w}(x)\in Sp^{\perp}[\nabla F_1(x),...,\nabla F_k(x),\nabla G(x)]$ is the conservative part.
Consequently, there exists $\alpha_1(x),...,\alpha_k(x),\alpha
(x)\in \mathbb{R}$ such that
\begin{equation}\label{forma-v}
\mathbf{v}(x)=\sum_{i=1}^k\alpha_i(x)\nabla F_i(x)+\alpha (x)\nabla
G(x).
\end{equation}
If $\nabla F_1(x),...,\nabla F_k(x),\nabla G(x)$ are linear
independent, then locally around $x\in M$ the functions
$\alpha_1,...,\alpha_k,\alpha$ can be chosen to be smooth and they
are unique with this property.

The algebraic system \eqref{problema} becomes
\begin{equation}\label{problemah}
    \left\{%
\begin{array}{ll}
    \sum_{i=1}^k\alpha_i(x)<\nabla F_i(x),\nabla F_{1}(x)>+\alpha (x) <\nabla G(x),\nabla F_{1}(x)>=0 \\
   \,\,\,\,\,\,\,\,\, ... \\
    \sum_{i=1}^k\alpha_i(x)<\nabla F_i(x),\nabla F_{k}(x)>+\alpha (x) <\nabla G(x),\nabla F_{k}(x)>=0 \\
    \sum_{i=1}^k\alpha_i(x)<\nabla F_i(x),\nabla G(x)\,>+\alpha (x) <\nabla G(x)\,,\nabla G(x)>\,=h(x) \\
\end{array}%
\right.
\end{equation}
In what follows, we will introduce several notations. If $f_1,...,f_r,g_1,...,g_s:M\rightarrow \mathbb{R}$ are smooth functions on the manifold $M$, we define the $r\times s$ matrix
\begin{equation}\label{sigma}
\Sigma_{(g_1,...,g_s)}^{(f_1,...,f_r)}=\left(%
\begin{array}{cccc}
  <\nabla g_1,\nabla f_{1}> & ... & <\nabla g_s,\nabla f_{1}> \\
  ... & ... & ... \\

  <\nabla g_1,\nabla f_r> & ... & <\nabla g_s,\nabla f_r> \\
\end{array}%
\right)
\end{equation}
We solve the linear system \eqref{problemah} for the unknowns
$\alpha_1,...,\alpha_k,\alpha$. The associated matrix is $\Sigma_{(F_1,...,F_k,G)}^{(F_1,...,F_k,G)}$ and the augmented matrix is given by
$$\widetilde{\Sigma}_{(F_1,...,F_k,G)}^{(F_1,...,F_k,G)}=\left(%
\begin{array}{ccccc|c}
   &  &  &  &  & 0 \\
   &  &  &  &  & ... \\
   &  & \Sigma_{(F_1,...,F_k,G)}^{(F_1,...,F_k,G)} &  &  & ... \\
   &  &  &  &  & 0 \\
   &  &  &  &  & h \\
\end{array}%
\right).$$
The determinant of the Gram matrix generated by the vectors $\nabla F_1(x),...,\nabla F_k(x),\nabla G(x)$  has the properties that $\det \Sigma_{(F_1,...,F_k,G)}^{(F_1,...,F_k,G)}(x)\geq 0$ and equality holds when the vectors are linear dependent, see \cite{horn}.
If $\det \Sigma_{(F_1,...,F_k,G)}^{(F_1,...,F_k,G)}(x)\neq 0$, then $\textrm{rank} \Sigma_{(F_1,...,F_k,G)}^{(F_1,...,F_k,G)}(x)=\textrm{rank}\widetilde{\Sigma}_{(F_1,...,F_k,G)}^{(F_1,...,F_k,G)}(x)$.
Consequently, the linear system \eqref{problemah} is compatible and
according to Cramer's rule we obtain the solution
\begin{equation}\label{solutiih}
    \left\{%
\begin{array}{ll}
    \alpha_i(x)=(-1)^{i+k+1}\frac{h(x)}{\det \Sigma_{(F_1,...,F_k,G)}^{(F_1,...,F_k,G)}(x)}\det \Sigma_{(F_1,...,\widehat{F_i},...,F_k,G)}^{(F_1,...,F_k)}(x) \\
    \alpha(x)=\frac{h(x)}{\det \Sigma_{(F_1,...,F_k,G)}^{(F_1,...,F_k,G)}(x)}\det \Sigma_{(F_1,...,F_k)}^{(F_1,...,F_k)}(x) \\
\end{array}%
\right.,
\end{equation}
where $\,\,\widehat{\cdot}\,\,$ represent the missing term.

In the case when $\det \Sigma_{(F_1,...,F_k,G)}^{(F_1,...,F_k,G)}(x)=0$, we will discuss the compatibility of the linear
system \eqref{problemah}. If $\textrm{rank} \Sigma_{(F_1,...,F_k,G)}^{(F_1,...,F_k)}(x)<\textrm{rank} \Sigma_{(F_1,...,F_k,G)}^{(F_1,...,F_k,G)}(x)$, then $\textrm{rank}
\Sigma_{(F_1,...,F_k,G)}^{(F_1,...,F_k,G)}(x)=\textrm{rank} \widetilde{\Sigma}_{(F_1,...,F_k,G)}^{(F_1,...,F_k,G)}(x)$ and the linear system is compatible. If $\textrm{rank} \Sigma_{(F_1,...,F_k,G)}^{(F_1,...,F_k)}(x)=\textrm{rank} \Sigma_{(F_1,...,F_k,G)}^{(F_1,...,F_k,G)}(x)$, then the matrices $\Sigma_{(F_1,...,F_k,G)}^{(F_1,...,F_k)}(x)$ and $\Sigma_{(F_1,...,F_k,G)}^{(F_1,...,F_k,G)}(x)$ have a
common principal minor. The system is compatible if and only if
$h(x)=0$.

On the open set $\Omega:=\{x\in M\,|\,\det\Sigma_{(F_1,...,F_k,G)}^{(F_1,...,F_k,G)}(x)\neq
   0\}$ we can use the solution found in \eqref{solutiih} and write the vector field in \eqref{forma-v} as $\mathbf{v}=\frac{h}{\det\Sigma_{(F_1,...,F_k,G)}^{(F_1,...,F_k,G)}}\mathbf{v_0}$, where $\mathbf{v_0}\in \mathcal{X}(M)$ is the vector field which we
will call {\bf the standard control vector field} and is given by
\begin{equation}\label{v0}
    \mathbf{v_0}=\sum_{i=1}^k(-1)^{i+k+1}\det \Sigma_{(F_1,...,\widehat{F_i},...,F_k,G)}^{(F_1,...,F_k)}\nabla
    F_i+\det\Sigma_{(F_1,...,F_k)}^{(F_1,...,F_k)}\nabla G.
\end{equation}
For any $x\in M$, it is straightforward to see that the set
$\{\alpha_1^0(x),...,\alpha_k^0(x),\alpha^0(x)\}$, where
$\alpha_i^0(x):=(-1)^{i+k+1}\det \Sigma_{(F_1,...,\widehat{F_i},...,F_k,G)}^{(F_1,...,F_k)}(x)$ and
$\alpha^0(x):=\det\Sigma_{(F_1,...,F_k)}^{(F_1,...,F_k)}(x)$ is a solution of the system
\eqref{problemah} for $h(x)=\det\Sigma_{(F_1,...,F_k,G)}^{(F_1,...,F_k,G)}(x)$. Consequently, the vector field $\mathbf{v_0}$ is a solution for \eqref{problema} in the particular case when we consider $h(x)=\det\Sigma_{(F_1,...,F_k,G)}^{(F_1,...,F_k,G)}(x)$ for any $x\in M$. For the case when we have only two conserved quantities, $F$ and $G$ for the initial system the standard control vector field has the form
$$\mathbf{v}_0=-<\nabla F,\nabla G>\nabla F+||\nabla F||^2\nabla G.$$

As a summary of the above considerations we have the following result.

\begin{thm}\label{caracterizare-control} We have the following characterization of the control
vector field $\mathbf{u}\in \mathcal{X}(M)$ that is a solution of \eqref{problema}.
\begin{itemize}
   \item [(i)] Let $\Omega :=\{x\in M\,|\,\det\Sigma_{(F_1,...,F_k,G)}^{(F_1,...,F_k,G)}(x)\neq
   0\}$ which is an open subset of $M$. Any control
vector field $\mathbf{u}\in \mathcal{X}(\Omega )$ that satisfies
\eqref{problema} (i.e. dissipates the function $G$ with the rate of dissipation $h$) is of the form
$$\mathbf{u}(x)=\frac{h(x)}{\det\Sigma_{(F_1,...,F_k,G)}^{(F_1,...,F_k,G)}(x)}\mathbf{v_0}(x)+\mathbf{w}(x),\,\,\forall x\in \Omega,$$
where $\mathbf{w}\in \mathcal{X}(\Omega )$ with $\mathbf{w}(x)\in Sp^{\perp}[\nabla
F_1(x),...,\nabla F_k(x),\nabla G(x)]$.

  \item [(ii)] If the function $\frac{h}{\det\Sigma_{(F_1,...,F_k,G)}^{(F_1,...,F_k,G)}}:\Omega\rightarrow \mathbb{R}$ can be prolonged to a continuous function $q:M\rightarrow \mathbb{R}$, then the
control vector field has the form $\mathbf{u}(x)=q(x)\mathbf{v_0}(x)+\mathbf{w}(x)$, where $\mathbf{w}\in
\mathcal{X}(M)$ with $\mathbf{w}(x)\in Sp^{\perp}[\nabla F_1(x),...,\nabla
F_k(x),\nabla G(x)]$.

 \item [(iii)] In the particular case when $h=\det\Sigma_{(F_1,...,F_k,G)}^{(F_1,...,F_k,G)}$, the control vector field that is a solution of \eqref{problema} has the form
$$\mathbf{u}(x)=\mathbf{v_0}(x)+\mathbf{w}(x),$$
where $\mathbf{w}\in
\mathcal{X}(M)$, with $\mathbf{w}(x)\in Sp^{\perp}[\nabla F_1(x),...,\nabla
F_k(x),\nabla G(x)]$.
\end{itemize}
\end{thm}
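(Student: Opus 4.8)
The plan is to lean on the orthogonal decomposition $\mathbf{u}=\mathbf{v}+\mathbf{w}$ introduced before the statement. Since $\mathbf{w}(x)\in Sp^{\perp}[\nabla F_1(x),\dots,\nabla F_k(x),\nabla G(x)]$, all the inner products $<\mathbf{w},\nabla F_\alpha>$ and $<\mathbf{w},\nabla G>$ vanish identically, so $\mathbf{u}$ solves \eqref{problema} if and only if its tangential part $\mathbf{v}$ does. Thus in each item it suffices to pin down $\mathbf{v}$ and then let $\mathbf{w}$ be an arbitrary field in the orthogonal complement. Most of item (i) is then just a reorganization of the computation already carried out: I would first record that on $\Omega$ the condition $\det\Sigma_{(F_1,\dots,F_k,G)}^{(F_1,\dots,F_k,G)}(x)\neq 0$ is equivalent, by the Gram determinant criterion cited from \cite{horn}, to the linear independence of $\nabla F_1(x),\dots,\nabla F_k(x),\nabla G(x)$; hence the representation \eqref{forma-v} is unique with smooth coefficients and \eqref{problema} becomes exactly the nonsingular system \eqref{problemah}. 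Cramer's rule then gives \eqref{solutiih}, and substituting these coefficients into \eqref{forma-v} and factoring out the common scalar $h/\det\Sigma_{(F_1,\dots,F_k,G)}^{(F_1,\dots,F_k,G)}$ leaves precisely the expression defining $\mathbf{v_0}$ in \eqref{v0}, so $\mathbf{v}=\frac{h}{\det\Sigma_{(F_1,\dots,F_k,G)}^{(F_1,\dots,F_k,G)}}\mathbf{v_0}$. The only routine check here is the bookkeeping of the cofactor signs $(-1)^{i+k+1}$ and of which row and column are deleted, so that the Cramer determinants coincide with the $\Sigma$-minors appearing in \eqref{v0}.

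The fact that drives items (ii) and (iii) is the observation recorded immediately before the theorem: the coefficients $\alpha_i^0,\alpha^0$ defining $\mathbf{v_0}$ solve \eqref{problemah} with $h=\det\Sigma_{(F_1,\dots,F_k,G)}^{(F_1,\dots,F_k,G)}$ at \emph{every} point of $M$. Rewriting this in terms of the vector field, it says that $<\mathbf{v_0},\nabla F_\alpha>=0$ for all $\alpha$ and $<\mathbf{v_0},\nabla G>=\det\Sigma_{(F_1,\dots,F_k,G)}^{(F_1,\dots,F_k,G)}$ hold on all of $M$, not merely on $\Omega$. For item (iii), with $h=\det\Sigma_{(F_1,\dots,F_k,G)}^{(F_1,\dots,F_k,G)}$, I would simply set $\mathbf{w}:=\mathbf{u}-\mathbf{v_0}$ and compute $<\mathbf{w},\nabla F_\alpha>=0-0=0$ and $<\mathbf{w},\nabla G>=\det\Sigma_{(F_1,\dots,F_k,G)}^{(F_1,\dots,F_k,G)}-\det\Sigma_{(F_1,\dots,F_k,G)}^{(F_1,\dots,F_k,G)}=0$ at every point; hence $\mathbf{w}(x)\in Sp^{\perp}[\dots]$ on all of $M$ with no limiting argument required.

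For item (ii) I would define $\mathbf{w}:=\mathbf{u}-q\mathbf{v_0}$, which is continuous on $M$ since $q$ is continuous and $\mathbf{v_0}$ is smooth. The relations $<\mathbf{w},\nabla F_\alpha>=<\mathbf{u},\nabla F_\alpha>-q<\mathbf{v_0},\nabla F_\alpha>=0$ hold on all of $M$ by the global identities above. The remaining quantity is $<\mathbf{w},\nabla G>=h-q\det\Sigma_{(F_1,\dots,F_k,G)}^{(F_1,\dots,F_k,G)}$, which vanishes on $\Omega$ precisely because there $q=h/\det\Sigma_{(F_1,\dots,F_k,G)}^{(F_1,\dots,F_k,G)}$; as $h$, $q$, and $\det\Sigma_{(F_1,\dots,F_k,G)}^{(F_1,\dots,F_k,G)}$ are all continuous, this difference vanishes on $\overline{\Omega}$ by continuity, giving $\mathbf{w}(x)\in Sp^{\perp}[\dots]$ there.

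I expect the single genuine obstacle to be exactly this passage from $\Omega$ to all of $M$ in item (ii): the argument is clean on $\overline{\Omega}$, and to cover all of $M$ one should note that in the setting of interest $\Omega$ is dense, so that $\overline{\Omega}=M$ and continuity finishes the proof. On any open set where $\det\Sigma_{(F_1,\dots,F_k,G)}^{(F_1,\dots,F_k,G)}\equiv 0$ the continuous extension $q$ carries no information from $\Omega$, and there one must instead invoke the compatibility analysis preceding the theorem, which in the equal-rank case forces $h=0$ and thus again yields $<\mathbf{w},\nabla G>=h-q\cdot 0=0$. Handling this degenerate locus correctly, rather than the algebra of (i) and (iii), is the delicate point of the argument.
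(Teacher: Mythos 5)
Your handling of (i) and (iii) is correct and is essentially the paper's own argument: the theorem is introduced there with ``As a summary of the above considerations,'' and those considerations are exactly what you reproduce --- the orthogonal splitting $\mathbf{u}=\mathbf{v}+\mathbf{w}$, the equivalence of \eqref{problema} with the linear system \eqref{problemah}, Cramer's rule on $\Omega$, and the pointwise observation that $(\alpha_1^0,\dots,\alpha_k^0,\alpha^0)$ solves \eqref{problemah} with $h=\det\Sigma_{(F_1,...,F_k,G)}^{(F_1,...,F_k,G)}$ at \emph{every} point of $M$. Your remark that the identities $<\mathbf{v_0},\nabla F_j>=0$ and $<\mathbf{v_0},\nabla G>=\det\Sigma_{(F_1,...,F_k,G)}^{(F_1,...,F_k,G)}$ hold on all of $M$ is right: the first is the last-row Laplace expansion of a Gram-type determinant with a repeated row, the second that of the full Gram determinant, so both are algebraic identities needing no nondegeneracy.

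The genuine gap is in (ii), exactly at the place you flagged, and your repair closes only half of it. On an open set $U\subset M\setminus\overline{\Omega}$ the compatibility discussion has two branches, and you invoke only the equal-rank one ($\nabla G\in Sp[\nabla F_1(x),...,\nabla F_k(x)]$, which indeed forces $h=0$). In the other branch, where $\nabla F_1,\dots,\nabla F_k$ are linearly dependent but $\nabla G\notin Sp[\nabla F_1(x),...,\nabla F_k(x)]$, the system \eqref{problemah} is compatible for \emph{arbitrary} $h$, and nothing forces $h=0$. On such a set every determinant coefficient in \eqref{v0} vanishes (each of those $k\times k$ minors has its rows indexed by the dependent family $\nabla F_1,\dots,\nabla F_k$), hence $\mathbf{v_0}\equiv 0$ there; then $q\mathbf{v_0}+\mathbf{w}=\mathbf{w}$ is orthogonal to $\nabla G$ for any continuous extension $q$, while a solution $\mathbf{u}$ of \eqref{problema} with $h\neq 0$ does exist. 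Concretely: take $M=\mathbb{R}^3$ Euclidean, $k=1$, $F$ smooth and constant on a ball $U$, $G=x_3$, and $h$ a bump function supported in $U$; then $h/\det\Sigma_{(F,G)}^{(F,G)}\equiv 0$ on $\Omega$, so $q\equiv 0$ is a continuous extension, $\mathbf{u}=h\,\partial/\partial x_3$ solves \eqref{problema}, yet $\mathbf{u}-q\mathbf{v_0}=\mathbf{u}$ satisfies $<\mathbf{u},\nabla G>=h\neq 0$ on part of $U$. So the unequal-rank locus cannot be ``handled'': statement (ii) is false there as written, and your proof is complete precisely under the extra hypothesis you float in passing, namely $\overline{\Omega}=M$, in which case your continuity argument on $\overline{\Omega}$ already finishes. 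To be fair to you, the paper never confronts this either --- it states (ii) as part of the summary and silently ignores the degenerate locus --- so your attempt is more careful than the source; but the case analysis as you wrote it does not close the argument, and no argument can.
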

\vspace{2mm}
The dissipative part of the control vector field $\mathbf{u}$ can be written always as $\mathbf{v}=q\mathbf{v}_0$, where $q$ is a smooth function defined at least on the open subset $\Omega$ of $M$. In the paper \cite{indian} is constructed an infinite sequence of high-order dissipative vector fields. As a consequence of the above theorem, we obtain that this sequence is generated by the standard control vector field $\mathbf{v}_0$.

We apply the above construction to the case when we have a dynamical system
\begin{equation}\label{sistem}
    \dot{x}=X(x),
\end{equation}
where $X\in \mathcal{X}(M)$. Suppose \eqref{sistem} admits
$F_1,...,F_k,G:M\rightarrow{\mathbb{R}}$ smooth $k+1$ conserved quantities.
We search for control vector fields $\mathbf{u}\in
\mathcal{X}(M)$ such that the perturbed system
\begin{equation}\label{perturbat}
\dot{x}=X(x)+\mathbf{u}(x)
\end{equation}
conserves $F_1,...,F_k$ and dissipates $G$ with a given rate. These vector fields are the solutions of \eqref{problema}.
The next result gives the dissipation behavior of the function $G$ along the solutions of the perturbed system \eqref{perturbat}, where $\mathbf{u}=\mathbf{v_0}+\mathbf{w}$.

\begin{thm}\label{crescator}
The function $G$ increases along the solutions of the system
$$\dot{x}=X(x)+\mathbf{v_0}(x)+\mathbf{w}(x),$$
where $\mathbf{w}\in \mathcal{X}(M)$ with $\mathbf{w}(x)\in Sp^{\perp}[\nabla
F_1(x),...,\nabla F_k(x),\nabla G(x)]$.
\end{thm}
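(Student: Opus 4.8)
The plan is to compute the time derivative of $G$ along a solution $x(t)$ of the perturbed system and show it is nonnegative everywhere. Since $\frac{dG}{dt}=<\nabla G,\dot{x}>$, by linearity of the scalar product I would split the derivative into three pieces,
$$\frac{dG}{dt}=<\nabla G,X>+<\nabla G,\mathbf{v_0}>+<\nabla G,\mathbf{w}>,$$
and evaluate each term separately using the structural facts already established in this section.

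The two outer terms vanish identically. First, $G$ is a conserved quantity of the unperturbed system $\dot{x}=X(x)$, so by the characterization of conserved quantities given at the start of Section 2 (applied to $G$ in place of $F_\alpha$) we have $<\nabla G,X>=L_XG=0$. Second, the hypothesis $\mathbf{w}(x)\in Sp^{\perp}[\nabla F_1(x),...,\nabla F_k(x),\nabla G(x)]$ says in particular that $\mathbf{w}$ is orthogonal to $\nabla G$, hence $<\nabla G,\mathbf{w}>=0$. Thus the entire dissipation rate is carried by the middle term.

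For the middle term I would invoke the remark following \eqref{v0}: for every $x\in M$, the vector field $\mathbf{v_0}$ is a solution of \eqref{problema} in the particular case $h=\det\Sigma_{(F_1,...,F_k,G)}^{(F_1,...,F_k,G)}$. Reading off the last equation of \eqref{problema} for this choice of $h$, and using the symmetry of the scalar product, gives
$$<\nabla G,\mathbf{v_0}>=\det\Sigma_{(F_1,...,F_k,G)}^{(F_1,...,F_k,G)}.$$
Combining the three evaluations yields $\frac{dG}{dt}=\det\Sigma_{(F_1,...,F_k,G)}^{(F_1,...,F_k,G)}(x(t))$ along the flow.

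To finish, I would recall that $\Sigma_{(F_1,...,F_k,G)}^{(F_1,...,F_k,G)}$ is the Gram matrix of the vectors $\nabla F_1,...,\nabla F_k,\nabla G$, and, as noted earlier in the excerpt, its determinant satisfies $\det\Sigma_{(F_1,...,F_k,G)}^{(F_1,...,F_k,G)}(x)\geq 0$ for all $x$, with equality exactly when these gradients are linearly dependent. Hence $\frac{dG}{dt}\geq 0$ along every solution, which is the claim. I do not expect a serious obstacle here, since all the ingredients are prepared by the preceding construction; the only point requiring slight care is that the identity $<\nabla G,\mathbf{v_0}>=\det\Sigma_{(F_1,...,F_k,G)}^{(F_1,...,F_k,G)}$ must be used in its global form, valid on all of $M$ rather than merely on the open set $\Omega$, which is precisely what the remark following \eqref{v0} guarantees.
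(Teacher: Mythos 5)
Your proof is correct and follows essentially the same route as the paper's: both rest on the fact that $\mathbf{v_0}$ solves \eqref{problema} with the particular rate $h=\det\Sigma_{(F_1,...,F_k,G)}^{(F_1,...,F_k,G)}$, which is nonnegative as a Gram determinant. Your version merely makes explicit the term-by-term splitting $<\nabla G,X>=<\nabla G,\mathbf{w}>=0$ that the paper leaves implicit (its proof even writes the system without the $X$ term), so this is the same argument written out in more detail.
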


\begin{proof}
The standard control vector field $\mathbf{v}_0$ is a solution of \eqref{problema} with $h(x)=\det\Sigma_{(F_1,...,F_k,G)}^{(F_1,...,F_k,G)}(x)\geq 0$ for any $x\in M$. Consequently,
$$\frac{dG}{dt}(x(t,x_0))=h(x(t,x_0))=\det\Sigma_{(F_1,...,F_k,G)}^{(F_1,...,F_k,G)}(x(t,x_0))\geq 0,$$
where $x(\cdot , x_0)$ is the solution of the dynamical system $\dot{x}=\mathbf{v_0}(x)+\mathbf{w}(x)$ with the initial condition $x(0,x_0)=x_0$.
\end{proof}

\section{The covariant formulation of the standard control vector field}

In the Euclidean case, a dissipation was constructed in \cite{indian} that preserves $k$ conserved quantities of a dynamical system and dissipates another conserved quantity. We generalize this construction to a general Riemannian manifold and moreover, we prove that this dissipation is minus the standard control vector field $\mathbf{v}_0$.

In analogy to \cite{indian}, we introduce the following one form
\begin{equation*}\label{one-form}
    \omega=(-1)^n*(dF_1\wedge ... \wedge dF_k\wedge *(dG\wedge dF_1\wedge ... \wedge dF_k)),
\end{equation*}
where $*$ is the Hodge star operator associated to the Riemannian metric $g$.
The dissipation vector field $\sharp_g(\omega)$, for the Euclidean case, was introduced in \cite{indian}. Next we will prove that this dissipation is precisely the standard control vector field, i.e. $\mathbf{v}_0=-\sharp_g(\omega)$, where $\sharp_g:\Omega^1(M)\rightarrow \mathcal{X}(M)$ is associated with the Riemannian metric $g$.

In local coordinates we have
$$dF_1=\frac{\partial F_1}{\partial x^{a_1}}dx^{a_1},...,dF_k=\frac{\partial F_k}{\partial x^{a_k}}dx^{a_k}, dG=\frac{\partial G}{\partial x^{a_{k+1}}}dx^{a_{k+1}}.$$
By direct computation  we obtain
$$*(dF_1\wedge ... \wedge dF_k\wedge dG)\shortstack[pos]{\small{\eqref{Hodge-formula}}\\\,=\,}\frac{\sqrt{|g|}}{(n-k-1)!}\frac{\partial F_1}{\partial x^{a_1}}...\frac{\partial F_k}{\partial x^{a_k}}\frac{\partial G}{\partial x^{a_{k+1}}}g^{a_1l_1}...g^{a_{k+1}l_{k+1}}\epsilon_{l_1...l_{k+1}l_{k+2}...l_n}dx^{l_{k+2}}\wedge...\wedge dx^{l_n}.$$
Consequently, we have the following computation
\begin{eqnarray*}
\omega&=&(-1)^{n+k}*(dF_1\wedge ... \wedge dF_k\wedge *(dF_1\wedge ... \wedge dF_k\wedge dG))\nonumber \\
&\shortstack[pos]{\small{\eqref{Hodge-formula}}\\\,\nonumber \\
=\,}&\frac{(-1)^{n+k}|g|}{(n-k-1)!}\frac{\partial F_1}{\partial x^{b_1}}...\frac{\partial F_k}{\partial x^{b_k}}\frac{\partial F_1}{\partial x^{a_1}}...\frac{\partial F_k}{\partial x^{a_k}}\frac{\partial G}{\partial x^{a_{k+1}}}\nonumber \\
& &g^{a_1l_1}...g^{a_{k+1}l_{k+1}}g^{p_{k+2}l_{k+2}}...g^{p_nl_n}\epsilon_{l_1...l_{k+1}l_{k+2}...l_n}
g^{b_1s_1}...g^{b_ks_k}\epsilon_{s_1...s_kp_{k+2}...p_n q}dx^q \nonumber \\
&\shortstack[pos]{\small{\eqref{determinant-amestecat}}\\\,=\,}&\frac{(-1)^{n+k}|g|}{(n-k-1)!}\frac{\partial F_1}{\partial x^{b_1}}...\frac{\partial F_k}{\partial x^{b_k}}\frac{\partial F_1}{\partial x^{a_1}}...\frac{\partial F_k}{\partial x^{a_k}}\frac{\partial G}{\partial x^{a_{k+1}}}\nonumber \\
& &|g^{-1}|\epsilon_{a_1...a_ka_{k+1}p_{k+2}...p_n} g^{b_1s_1}...g^{b_ks_k}\epsilon_{s_1...s_kp_{k+2}...p_n q}dx^q \nonumber \\
&\shortstack[pos]{\small{\eqref{q-sarit}}\\\,=\,}&\frac{(-1)^{n+k}}{(n-k-1)!}\frac{\partial F_1}{\partial x^{b_1}}...\frac{\partial F_k}{\partial x^{b_k}}\frac{\partial F_1}{\partial x^{a_1}}...\frac{\partial F_k}{\partial x^{a_k}}\frac{\partial G}{\partial x^{a_{k+1}}}\nonumber \\
& &\epsilon_{a_1...a_ka_{k+1}p_{k+2}...p_n} g^{b_1s_1}...g^{b_ks_k}(-1)^{n-k-1}\epsilon_{s_1...s_kqp_{k+2}...p_n }dx^q \nonumber \\
&\shortstack[pos]{\small{\eqref{Ricci-Kroneker}+\eqref{q-sumare-factorial}}\\\,=\,}&-\frac{1}{(n-k-1)!}\frac{\partial F_1}{\partial x^{b_1}}...\frac{\partial F_k}{\partial x^{b_k}}\frac{\partial F_1}{\partial x^{a_1}}...\frac{\partial F_k}{\partial x^{a_k}}\frac{\partial G}{\partial x^{a_{k+1}}} g^{b_1s_1}...g^{b_ks_k}(n-k-1)!\delta_{a_1...a_ka_{k+1}}^{s_1...s_kq}dx^q \nonumber \\
&=&-(\sum_{i=1}^{k+1}\omega_i),
\end{eqnarray*}
where $\omega_i=\frac{\partial F_1}{\partial x^{b_1}}...\frac{\partial F_k}{\partial x^{b_k}}\frac{\partial F_1}{\partial x^{a_1}}...\frac{\partial F_k}{\partial x^{a_k}}\frac{\partial G}{\partial x^{a_{k+1}}} g^{b_1s_1}...g^{b_ks_k}\delta_{a_1...a_ka_{k+1}}^{s_1...s_ka_i}dx^{a_i}$.
\medskip

\noindent We need to analyze the one forms $\omega_1,...,\omega_k,\omega_{k+1}$. We have
\begin{eqnarray*}
\omega_{k+1}&=&\frac{\partial F_1}{\partial x^{b_1}}...\frac{\partial F_k}{\partial x^{b_k}}\frac{\partial F_1}{\partial x^{a_1}}...\frac{\partial F_k}{\partial x^{a_k}}\frac{\partial G}{\partial x^{a_{k+1}}} g^{b_1s_1}...g^{b_ks_k}\delta_{a_1...a_ka_{k+1}}^{s_1...s_ka_{k+1}}dx^{a_{k+1}}\nonumber \\
&\shortstack[pos]{\small{\eqref{q-constant}}\\\,=\,}&\frac{\partial F_1}{\partial x^{b_1}}...\frac{\partial F_k}{\partial x^{b_k}}\frac{\partial F_1}{\partial x^{a_1}}...\frac{\partial F_k}{\partial x^{a_k}}\delta_{a_1...a_k}^{s_1...s_k}\frac{\partial G}{\partial x^{a_{k+1}}}dx^{a_{k+1}}\nonumber \\
&=&\frac{\partial F_1}{\partial x^{b_1}}...\frac{\partial F_k}{\partial x^{b_k}}\frac{\partial F_1}{\partial x^{\sigma(s_1)}}...\frac{\partial F_k}{\partial x^{\sigma(s_k)}}g^{b_1s_1}...g^{b_ks_k}\delta_{\sigma(s_1)...\sigma(s_k)}^{s_1...s_k}\frac{\partial G}{\partial x^{a_{k+1}}}dx^{a_{k+1}}\nonumber \\
&\shortstack[pos]{\small{\eqref{sigma-developare}}\\\,=\,}&\frac{\partial F_1}{\partial x^{b_1}}...\frac{\partial F_k}{\partial x^{b_k}}g^{b_1s_1}...g^{b_ks_k}(\sum_{\sigma}sgn(\sigma)\frac{\partial F_1}{\partial x^{\sigma(s_1)}}...\frac{\partial F_k}{\partial x^{\sigma(s_k)}})\frac{\partial G}{\partial x^{a_{k+1}}}dx^{a_{k+1}}\nonumber \\
&=&\det\Sigma_{(F_1,...,F_k)}^{(F_1,...,F_k)}dG.
\end{eqnarray*}
Also
\begin{eqnarray*}
\omega_1&\shortstack[pos]{\small{\eqref{q-sarit}}\\\,=\,}&(-1)^k\frac{\partial F_1}{\partial x^{b_1}}...\frac{\partial F_k}{\partial x^{b_k}}\frac{\partial F_2}{\partial x^{a_2}}...\frac{\partial F_k}{\partial x^{a_k}}\frac{\partial G}{\partial x^{a_{k+1}}} g^{b_1s_1}...g^{b_ks_k}\delta_{a_2...a_{k+1}a_1}^{s_1...s_ka_1}\frac{\partial F_1}{\partial x^{a_1}}dx^{a_1}\nonumber \\
&=&(-1)^k\frac{\partial F_1}{\partial x^{b_1}}...\frac{\partial F_k}{\partial x^{b_k}}g^{b_1s_1}...g^{b_ks_k}(\sum_{\sigma}sgn(\sigma)\frac{\partial G}{\partial x^{\sigma(s_k)}}\frac{\partial F_2}{\partial x^{\sigma(s_1)}}...\frac{\partial F_k}{\partial x^{\sigma(s_{k-1})}})dF_1\nonumber \\
&\shortstack[pos]{\small{\eqref{sigma-developare}}\\\,=\,}&(-1)^k\det\Sigma_{(\hat{F_1},F_2,...,F_k,G)}^{(F_1,...,F_k)}dF_1.
\end{eqnarray*}
Analogously, for $i=\overline{2,k}$,
$\omega_i=(-1)^{k+i+1}\det\Sigma_{(F_1,...,\hat{F_i},...,F_k,G)}^{(F_1,...,F_k)}dF_i.$
\medskip

\noindent As a consequence of the above computations and of the definition of the standard control vector field \eqref{v0}, we obtain the following result.

\begin{thm}
On the Riemannian manifold $(M,g)$, we have the equivalent description of the standard control vector field
$$\mathbf{v}_0=(-1)^{n+1}\sharp_g(*(dF_1\wedge ... \wedge dF_k\wedge *(dG\wedge dF_1\wedge ... \wedge dF_k))).$$
\end{thm}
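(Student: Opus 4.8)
The plan is to read the result directly off the chain of Hodge-star computations already carried out, by applying the musical isomorphism $\sharp_g$ to the one-form $\omega=(-1)^n*(dF_1\wedge\dots\wedge dF_k\wedge *(dG\wedge dF_1\wedge\dots\wedge dF_k))$. The essential input is the identity $\omega=-\sum_{i=1}^{k+1}\omega_i$ derived above, together with the explicit evaluation of each summand: $\omega_{k+1}=\det\Sigma_{(F_1,\dots,F_k)}^{(F_1,\dots,F_k)}\,dG$ and, for $i=1,\dots,k$, $\omega_i=(-1)^{k+i+1}\det\Sigma_{(F_1,\dots,\widehat{F_i},\dots,F_k,G)}^{(F_1,\dots,F_k)}\,dF_i$ (the case $i=1$ producing the factor $(-1)^k$, which is consistent with the general exponent). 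I would record these as the starting data and then simply transport them through $\sharp_g$.

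The key algebraic facts I would invoke about the sharp operator are that it is $C^\infty(M)$-linear, so it commutes with multiplication by the determinant coefficients and distributes over the finite sum, and that it satisfies $\sharp_g(df)=\nabla f$ for every smooth $f$, since raising the index of $df$ with $g^{ij}$ is exactly the gradient introduced in Section 2. Applying $\sharp_g$ to $-\omega=\sum_{i=1}^{k+1}\omega_i$ then gives
$$-\sharp_g(\omega)=\sum_{i=1}^{k}(-1)^{i+k+1}\det\Sigma_{(F_1,\dots,\widehat{F_i},\dots,F_k,G)}^{(F_1,\dots,F_k)}\nabla F_i+\det\Sigma_{(F_1,\dots,F_k)}^{(F_1,\dots,F_k)}\nabla G,$$
which is precisely the right-hand side of the definition \eqref{v0} of $\mathbf{v}_0$. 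Hence $\mathbf{v}_0=-\sharp_g(\omega)$, and since $\omega=(-1)^n*(\dots)$ with $\sharp_g$ linear over constants, we obtain $\mathbf{v}_0=-\sharp_g(\omega)=(-1)^{n+1}\sharp_g(*(\dots))$, as claimed. I would also remark that this identity is pointwise and purely algebraic, hence valid on all of $M$ rather than only on the open set $\Omega$: both $\mathbf{v}_0$ in \eqref{v0} and the Hodge-star construction are defined by smooth determinant expressions everywhere, so no nondegeneracy of the Gram matrix $\Sigma_{(F_1,\dots,F_k,G)}^{(F_1,\dots,F_k,G)}$ is required at this stage.

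The genuine labor — the repeated action of the Hodge star and the contraction of products of Levi-Civita symbols into generalized Kronecker deltas — has already been absorbed into the displayed computation, so the only remaining difficulty is sign and index bookkeeping. Concretely, the main point to verify is that the exponent $k+i+1$ emerging from the $\delta$-contraction in each $\omega_i$ agrees with the exponent $i+k+1$ appearing in \eqref{v0}, and that the two minus signs, namely the global $(-1)^n$ in the definition of $\omega$ and the leading $-1$ in $\omega=-\sum_{i}\omega_i$, combine to the asserted $(-1)^{n+1}$. Once the evaluations of the $\omega_i$ are in hand, I expect this matching to be entirely routine, so the theorem is essentially an assembly of the preceding lemma-level computations.
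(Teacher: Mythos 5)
Your final assembly step is correct, and it is exactly how the paper closes its own argument: granting the decomposition $\omega=-\sum_{i=1}^{k+1}\omega_i$ together with the evaluations $\omega_{k+1}=\det\Sigma_{(F_1,...,F_k)}^{(F_1,...,F_k)}\,dG$ and $\omega_i=(-1)^{k+i+1}\det\Sigma_{(F_1,...,\hat{F_i},...,F_k,G)}^{(F_1,...,F_k)}\,dF_i$, the $C^{\infty}(M)$-linearity of $\sharp_g$ and the identity $\sharp_g(df)=\nabla f$ reproduce the right-hand side of \eqref{v0}, and the two signs combine to the stated $(-1)^{n+1}$. Your remark that the identity is pointwise and requires no nondegeneracy of the Gram matrix $\Sigma_{(F_1,...,F_k,G)}^{(F_1,...,F_k,G)}$ is also correct and consistent with the paper stating the theorem on all of $M$.

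The genuine gap is that everything you label your ``essential input'' is precisely the paper's proof, and you cite it rather than prove it. For a blind attempt there is no ``displayed computation already carried out'': the one-forms $\omega_i$, the decomposition $\omega=-\sum_{i=1}^{k+1}\omega_i$, and the determinant evaluations of each $\omega_i$ do not exist until one actually performs the local-coordinate computation, which is the entire mathematical content of the theorem. Concretely, that computation consists of two applications of the Hodge formula \eqref{Hodge-formula}, the contraction \eqref{determinant-amestecat} (which produces $|g^{-1}|$ and cancels the factor $|g|$), the index shift \eqref{q-sarit}, the conversion of the product of two Levi-Civita symbols into a generalized Kronecker delta and the summation over repeated indices via \eqref{Ricci-Kroneker} and \eqref{q-sumare-factorial}, the splitting of the Kronecker delta into the terms $\omega_1,...,\omega_{k+1}$, and then, for each $\omega_i$, the identities \eqref{q-constant}, \eqref{q-sarit} and the determinant expansion \eqref{sigma-developare} needed to recognize the Gram determinants $\det\Sigma_{(F_1,...,\hat{F_i},...,F_k,G)}^{(F_1,...,F_k)}$ and $\det\Sigma_{(F_1,...,F_k)}^{(F_1,...,F_k)}$. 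Without carrying this out, your argument only establishes the easy implication ``if $\omega$ decomposes as stated, then $\mathbf{v}_0=-\sharp_g(\omega)$''; and the stated evaluations cannot simply be posited, since obtaining them by working backwards from \eqref{v0} would presuppose the theorem. As it stands, the proposal defers, rather than supplies, the proof.
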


\section{The gradient formulation of the standard control vector field}

The standard control vector field $\mathbf{v_0}$ is tangent to every regular leaf $L_c:=\mathbf{F}^{-1}(c)$ generated by the regular values of the function $\mathbf{F}:=(F_1,...,F_k):M\rightarrow \mathbb{R}^k$.
We will endow every regular leaf $L_c$ with a Riemannian metric $\tau_c$ such that $\mathbf{v_0}$ is a gradient vector field on $L_c$, i.e.
$$\mathbf{v_0}_{_{|L_c}}=\nabla_{\tau_c}G_{|L_c}.$$

In order to do this we first construct a degenerate symmetric contravariant 2-tensor $\mathbf{T}$ on the manifold $M$ that is nondegenerate when restricted to every regular leaf $L_c$. The Riemannian metric $\tau_c$ will be
$\tau_c:=\mathbf{T}_{|L_c}^{-1}$.

In what follows, we will construct the tensor $\mathbf{T}$. In Riemannian geometry we can write the gradient vector field of the function $G$ as
$\nabla G=\mathbf{i}_{dG}g^{-1},$
where $g^{-1}$ is the contravariant 2-tensor $g^{-1}(x)=g^{pq}(x)\frac{\partial}{\partial x^p}\otimes \frac{\partial}{\partial x^q}$ constructed from the metric tensor $g$ and $\mathbf{i}$ is the interior product. We recall the following standard results in Riemannian geometry which will be used several times in this section,
$$\nabla H(\alpha)=g^{-1}(dH,\alpha)=\alpha (\nabla H),\,\,\,dH(\nabla K)=<\nabla H, \nabla K>,$$
where $\alpha\in \Omega^1(M)$ and $H,K\in \mathcal{C}^{\infty}(M)$.

We have the following contravariant 2-tensor
$\nabla F_i\otimes\nabla F_j :\Omega^1(M)\times\Omega^1(M)\rightarrow \mathbb{R}$
$$\nabla F_i\otimes\nabla F_j\,\,(\alpha,\beta)=\alpha (\nabla F_i)\beta(\nabla F_j).$$

\begin{lem}\label{ajutatoare}
For $i,j\in\{1,...,k\}$ we have the equalities
\begin{itemize}
  \item [(i)] $\mathbf{i}_{dG}(\nabla F_i\otimes\nabla F_j)=<\nabla G,\nabla F_i>\nabla F_j$
  \item [(ii)] $\nabla F_i\otimes\nabla F_j=g^{rp}g^{sq}\frac{\partial F_i}{\partial x^r}\frac{\partial F_j}{\partial x^s}\frac{\partial}{\partial x^p}\otimes \frac{\partial}{\partial x^q}.$
\end{itemize}
\end{lem}

\begin{proof}
{\it (i)} The proof is a direct computation,
$$\mathbf{i}_{dG}(\nabla F_i\otimes\nabla F_j)(\alpha)=\nabla F_i\otimes\nabla F_j\,\,(dG,\alpha)=dG(\nabla F_i)\alpha(\nabla F_j)=<\nabla G,\nabla F_i>\nabla F_j(\alpha),$$
for any $\alpha \in \Omega^1(M)$.

\noindent {\it (ii)} By straightforward computation we have
$$\nabla F_i\otimes\nabla F_j\,\,(dx^p,dx^q)=dx^p(\nabla F_i)dx^q(\nabla F_j)=g^{-1}(dF_i,dx^p)g^{-1}(dF_j,dx^q)=g^{rp}g^{sq}\frac{\partial F_i}{\partial x^r}\frac{\partial F_j}{\partial x^s}.$$

\end{proof}

We define the symmetric contravariant 2-tensor $\mathbf{T}:\Omega^1(M)\times\Omega^1(M)\rightarrow \mathbb{R}$ by
\begin{equation}\label{T}
\mathbf{T}:=\sum_{i,j=1}^k(-1)^{i+j+1}\det\Sigma_{(F_1,...,\hat{F_i},...,F_k)}^{(F_1,...,\hat{F_j},...,F_k)}\nabla F_i\otimes\nabla F_j+\det\Sigma_{(F_1,...,F_k)}^{(F_1,...,F_k)} g^{-1}.
\end{equation}
For proving the symmetry of $\mathbf{T}$ we notice that $\Sigma_{(F_1,...,\hat{F_i},...,F_k)}^{(F_1,...,\hat{F_j},...,F_k)}=(\Sigma_{(F_1,...,\hat{F_j},...,F_k)}^{(F_1,...,\hat{F_i},...,F_k)})^T$ and consequently, $\det\Sigma_{(F_1,...,\hat{F_i},...,F_k)}^{(F_1,...,\hat{F_j},...,F_k)}=\det\Sigma_{(F_1,...,\hat{F_j},...,F_k)}^{(F_1,...,\hat{F_i},...,F_k)}$.
The contravariance can be deduced by the expressions in local coordinates, namely the coefficients of the symmetric 2-tensor $\mathbf{T}$ are
\begin{eqnarray*}
\mathbf{T}^{pq}&=&\sum_{i,j=1}^k(-1)^{i+j}g^{a_1b_1}...\widehat{g^{a_jb_j}}...g^{a_kb_k}g^{rp}g^{sq}\frac{\partial F_1}{\partial x^{b_1}}...\widehat{\frac{\partial F_j}{\partial x^{b_j}}}...\frac{\partial F_k}{\partial x^{b_k}}\frac{\partial F_i}{\partial x^r}\frac{\partial F_j}{\partial x^s}\eta_{a_1...\widehat{a_j}...a_k}(F_1...\widehat{F_i}...F_k) \nonumber \\
& &-g^{a_1b_1}...g^{a_kb_k}g^{pq}\frac{\partial F_1}{\partial x^{b_1}}...\frac{\partial F_k}{\partial x^{b_k}}\eta_{a_1...a_k}(F_1...F_k),
\end{eqnarray*}
where we have used \eqref{sigma-developare} and made the notation $\eta_{a_1...a_k}(F_1...F_k):=\det\left(%
\begin{array}{ccccccc}
    \frac{\partial F_1}{\partial x^{a_1}}& ... &  \frac{\partial F_k}{\partial x^{a_1}} \\
    ... & ...  & ... \\
   \frac{\partial F_1}{\partial x^{a_k}} & ... &  \frac{\partial F_k}{\partial x^{a_k}} \\
\end{array}%
\right).$
\medskip

\begin{rem}\label{T-expresie1F}
For the case $k=1$ and using the notation $F_1=F$, the expression $\det\Sigma_{(F_1,...,\hat{F_i},...,F_k)}^{(F_1,...,\hat{F_j},...,F_k)}$ becomes the constant function $1$ and
the expression $\det\Sigma_{(F_1,...,F_k)}^{(F_1,...,F_k)}$ becomes $||\nabla F||^2$, where $||\cdot||$ is the norm generated by the Riemannian metric $g$.
Consequently, the symmetric contravariant 2-tensor $\mathbf{T}$ has the form
\begin{equation}\label{T-cu1F}
    \mathbf{T}=-\nabla F\otimes\nabla F+||\nabla F||^2g^{-1}.
\end{equation}
In local coordinates we have the expresion
\begin{equation*}
\mathbf{T}^{pq}=(g^{ap}g^{bq}-g^{ab}g^{pq})\frac{\partial F}{\partial x^a}\frac{\partial F}{\partial x^b}.
\end{equation*}
For the Euclidean case, the above expression of the tensor $\mathbf{T}$ with $F$ being the Hamiltonian function of a Hamilton-Poisson system was used in \cite{birtea-comanescu}.
\begin{flushright}
$\bigtriangleup$
\end{flushright}
\end{rem}

Next, we study a few properties of the symmetric contravariant 2-tensor $\mathbf{T}$ that we need in what follows.
The functions $F_1,...,F_k:M\rightarrow \mathbb{R}$ generate the following distribution on $M$, $$\mathcal{X}_{tan}(M)=\{X\in\mathcal{X}(M)\,|\,dF_s(X)=0,\,s=\overline{1,k}\}$$
and its dual distribution
$$\Omega^1_{tan}(M)=\{\alpha\in \Omega^1(M)\,|\,\alpha(\nabla F_s)=0,\,s=\overline{1,k}\}.$$

\begin{prop}\label{proprietatiT}
We have the following results:
\begin{itemize}
  \item [(i)] $\nabla F_i\otimes \nabla F_j(\alpha, \beta)=0,\,\,\forall \alpha \in \Omega^1_{tan}(M),\forall \beta \in \Omega^1(M)$.
\item [(ii)] $\mathbf{T}(\alpha,\beta)=\det\Sigma_{(F_1,...,F_k)}^{(F_1,...,F_k)} g^{-1}(\alpha,\beta),\,\,\forall \alpha \in \Omega^1_{tan}(M),\forall \beta \in \Omega^1(M)$.
\item [(iii)] $\mathbf{T}(\alpha,\alpha)=\det\Sigma_{(F_1,...,F_k)}^{(F_1,...,F_k)} ||\alpha||^2,\,\,\forall \alpha \in \Omega^1_{tan}(M)$.
\item [(iv)] $\mathbf{T}(dF_s,\beta) =0,\,\,\forall s=\overline{1,k},\,\,\forall \beta\in \Omega^1(M)$.
\end{itemize}
\end{prop}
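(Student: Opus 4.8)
The plan is to dispatch (i), (ii) and (iii) almost immediately from the definitions, reducing each to the previous, and to spend the real effort on (iv), which is the only statement carrying genuine content: it asserts that every $dF_s$ lies in the radical of $\mathbf{T}$, so that $\mathbf{T}$ is degenerate on all of $M$, in accordance with its announced role of becoming nondegenerate only after restriction to a regular leaf.

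For (i), I would simply note that $\nabla F_i\otimes\nabla F_j(\alpha,\beta)=\alpha(\nabla F_i)\beta(\nabla F_j)$, and that $\alpha\in\Omega^1_{tan}(M)$ forces $\alpha(\nabla F_i)=0$ for every $i\in\{1,\dots,k\}$ by the very definition of $\Omega^1_{tan}(M)$; hence each such term vanishes. Statement (ii) then follows by evaluating the defining formula \eqref{T} on $(\alpha,\beta)$: every summand of the double sum carries a factor $\nabla F_i\otimes\nabla F_j(\alpha,\beta)$, which is zero by (i), so that only the term $\det\Sigma_{(F_1,...,F_k)}^{(F_1,...,F_k)}\,g^{-1}(\alpha,\beta)$ survives. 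Statement (iii) is just the specialization $\beta=\alpha$ of (ii) combined with $g^{-1}(\alpha,\alpha)=||\alpha||^2$.

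For (iv), I would evaluate $\mathbf{T}(dF_s,\beta)$ using the recalled identities $dF_s(\nabla F_i)=<\nabla F_s,\nabla F_i>$ and $g^{-1}(dF_s,\beta)=\beta(\nabla F_s)$, which turns \eqref{T} into
$$\mathbf{T}(dF_s,\beta)=\sum_{j=1}^k\Big(\sum_{i=1}^k(-1)^{i+j+1}\det\Sigma_{(F_1,...,\hat{F_i},...,F_k)}^{(F_1,...,\hat{F_j},...,F_k)}<\nabla F_s,\nabla F_i>\Big)\beta(\nabla F_j)+\det\Sigma_{(F_1,...,F_k)}^{(F_1,...,F_k)}\beta(\nabla F_s).$$
The decisive observation is that $\det\Sigma_{(F_1,...,\hat{F_i},...,F_k)}^{(F_1,...,\hat{F_j},...,F_k)}$ is exactly the minor of the Gram matrix $\Sigma_{(F_1,...,F_k)}^{(F_1,...,F_k)}$ obtained by deleting its $j$-th row and $i$-th column, so that $(-1)^{i+j+1}\det\Sigma_{(F_1,...,\hat{F_i},...,F_k)}^{(F_1,...,\hat{F_j},...,F_k)}$ is \emph{minus} the $(j,i)$ cofactor of that matrix. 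Since $<\nabla F_s,\nabla F_i>$ is simultaneously its $(s,i)$ entry, the inner sum over $i$ is a row-against-cofactor contraction, and the Laplace expansion theorem gives $-\delta_{sj}\det\Sigma_{(F_1,...,F_k)}^{(F_1,...,F_k)}$ (the full determinant when $s=j$, by expansion along the $j$-th row, and zero when $s\neq j$, by cofactor orthogonality). Summing over $j$ then produces $-\det\Sigma_{(F_1,...,F_k)}^{(F_1,...,F_k)}\beta(\nabla F_s)$, which cancels the final term and yields $\mathbf{T}(dF_s,\beta)=0$.

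The hard part is purely the bookkeeping in (iv): one must correctly match the omitted superscript and subscript indices of $\Sigma$ to the deleted row and column of the Gram matrix, and then verify that the sign $(-1)^{i+j+1}$ prescribed in \eqref{T} combines with the cofactor sign $(-1)^{i+j}$ to give precisely the minus sign needed for the cancellation against the $g^{-1}$ contribution. No analytic difficulty is involved; the entire proposition is a pointwise exercise in linear algebra, once (i) collapses (ii) and (iii) to the single surviving $g^{-1}$ term.
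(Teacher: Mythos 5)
Your proposal is correct, and parts (i)--(iii) coincide exactly with the paper's proof: (i) is immediate from $\alpha(\nabla F_i)=0$, (ii) follows by evaluating \eqref{T} and killing the double sum via (i), and (iii) is (ii) with $\beta=\alpha$. For (iv) the two arguments rest on the same piece of linear algebra but are packaged differently. You identify $(-1)^{i+j+1}\det\Sigma_{(F_1,...,\hat{F_i},...,F_k)}^{(F_1,...,\hat{F_j},...,F_k)}$ as minus the $(j,i)$ cofactor of the Gram matrix $\Sigma_{(F_1,...,F_k)}^{(F_1,...,F_k)}$ (correctly matching the hatted subscript/superscript to deleted column/row), contract the $s$-th row against these alien cofactors, and invoke the Laplace expansion theorem plus cofactor orthogonality to collapse the inner sum to $-\delta_{sj}\det\Sigma_{(F_1,...,F_k)}^{(F_1,...,F_k)}$, after which the sum over $j$ cancels the $g^{-1}$ contribution. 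The paper instead regroups the double sum so that the sum over $j$ becomes a Laplace expansion along a row of formal vector entries $\nabla F_1,...,\nabla F_k$, and then reassembles everything, including the term $\det\Sigma_{(F_1,...,F_k)}^{(F_1,...,F_k)}\nabla F_s$, into a single formal $(k+1)\times(k+1)$ determinant whose last column coincides with its $s$-th column, hence vanishes. The two devices are equivalent---cofactor orthogonality is proved by precisely this repeated-row/column argument---but yours stays entirely within scalar determinant identities (the factors $\beta(\nabla F_j)$ sit outside the determinants), which is slightly more elementary, while the paper's bordered-determinant formulation is more compact, makes the vanishing visible at a glance, and is the same formal notation the paper reuses for the orthogonal projector \eqref{proiector} in Section 5.
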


\begin{proof}
$(i)$ By definition we have  $\nabla F_i\otimes \nabla F_j(\alpha, \beta)=\alpha(\nabla F_i)\beta(\nabla F_j)$. Because $\alpha\in \Omega^1_{tan}(M)$ we have the equality $\alpha(\nabla F_i)=0$.

$(ii)$ From the definition of $\mathbf{T}$ we have that for any $\alpha \in \Omega^1_{tan}(M)$ and for any $\beta \in \Omega^1(M)$,
\begin{eqnarray*}
\mathbf{T}(\alpha,\beta)&=&\sum_{i,j=1}^k(-1)^{i+j+1}\det\Sigma_{(F_1,...,\hat{F_i},...,F_k)}^{(F_1,...,\hat{F_j},...,F_k)}\nabla F_i\otimes\nabla F_j(\alpha,\beta)+\det\Sigma_{(F_1,...,F_k)}^{(F_1,...,F_k)} g^{-1}(\alpha,\beta)\nonumber \\
&=&\det\Sigma_{(F_1,...,F_k)}^{(F_1,...,F_k)} g^{-1}(\alpha,\beta).
\end{eqnarray*}

$(iii)$ From $(ii)$ we have that for any $\alpha \in \Omega^1_{tan}(M)$
$$\mathbf{T}(\alpha,\alpha)=\det\Sigma_{(F_1,...,F_k)}^{(F_1,...,F_k)} g^{-1}(\alpha,\alpha)=\det\Sigma_{(F_1,...,F_k)}^{(F_1,...,F_k)} ||\alpha||^2.$$

$(iv)$ From Lemma \eqref{ajutatoare} (i), for any $s\in \overline{1,k}$ we obtain
\begin{eqnarray*}
\mathbf{i}_{dF_s}\mathbf{T}&=&\sum_{i,j=1}^k(-1)^{i+j+1}\det\Sigma_{(F_1,...,\hat{F_i},...,F_k)}^
{(F_1,...,\hat{F_j},...,F_k)}\mathbf{i}_{dF_s}\nabla F_i\otimes\nabla F_j+\det\Sigma_{(F_1,...,F_k)}^{(F_1,...,F_k)} \mathbf{i}_{dF_s}g^{-1}\nonumber \\
&=&\sum_{i,j=1}^k(-1)^{i+j+1}\det\Sigma_{(F_1,...,\hat{F_i},...,F_k)}^
{(F_1,...,\hat{F_j},...,F_k)}<\nabla F_s,\nabla F_i>\nabla F_j+\det\Sigma_{(F_1,...,F_k)}^{(F_1,...,F_k)}\nabla F_s\nonumber \\
&=&\sum_{i=1}^k(-1)^{k+i+1}<\nabla F_s,\nabla F_i>(\sum_{j=1}^k(-1)^{k+j}\det\Sigma_{(F_1,...,\hat{F_i},...,F_k)}^
{(F_1,...,\hat{F_j},...,F_k)}\nabla F_j)+\det\Sigma_{(F_1,...,F_k)}^{(F_1,...,F_k)}\nabla F_s \\
&=&\sum_{i=1}^k(-1)^{k+i+1}<\nabla F_s,\nabla F_i>\det\left(%
\begin{array}{ccccccc}
   <\nabla F_{1},\nabla F_{1}> & ... & <\nabla F_k,\nabla F_{1}> \\
   ... & ... & ... \\
   \widehat{<\nabla F_{1},\nabla F_i>} & ... & \widehat{<\nabla F_k,\nabla F_i>} \\
    ... &  ...  & ... \\
 <\nabla F_{1},\nabla F_k> & ... & <\nabla F_k,\nabla F_k> \\
    \nabla F_{1}& ... & \nabla F_k \\
\end{array}%
\right) \\
& &+\det\Sigma_{(F_1,...,F_k)}^{(F_1,...,F_k)}\nabla F_s \\
&=&\det\left(%
\begin{array}{ccccccc}
   <\nabla F_{1},\nabla F_{1}> & ... & <\nabla F_k,\nabla F_{1}>& <\nabla F_s,\nabla F_{1} \\
   ... & ... & ... & ... \\
 <\nabla F_{1},\nabla F_k> & ... & <\nabla F_k,\nabla F_k> & <\nabla F_s,\nabla F_k> \\
    \nabla F_{1}& ... & \nabla F_k & \nabla F_s \\
\end{array}%
\right)=0.
\end{eqnarray*}
\end{proof}

Property $(iv)$ of the above proposition shows that the 2-tensor $\mathbf{T}$ is degenerate and consequently it is not the inverse of any covariant metric 2-tensor.
 Nevertheless, the standard control vector field $\mathbf{v}_0$ still behaves like a gradient vector field with respect to the degenerate symmetric contravariant 2-tensor $\mathbf{T}$.

\begin{thm}\label{v0-cu-i}
On the manifold $(M,g)$, the standard control vector field $\mathbf{v_0}$ is given by the following formula,
$$\mathbf{v_0}=\mathbf{i}_{dG}(\mathbf{T}).$$
\end{thm}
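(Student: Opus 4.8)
The plan is to apply the interior product $\mathbf{i}_{dG}$ directly to the defining expression \eqref{T} of $\mathbf{T}$ and recognize the outcome as a Laplace expansion that reproduces \eqref{v0}. First I would split $\mathbf{T}$ into its two summands and use the two basic facts already at hand: Lemma \ref{ajutatoare}(i), giving $\mathbf{i}_{dG}(\nabla F_i\otimes\nabla F_j)=<\nabla G,\nabla F_i>\nabla F_j$, and the standard Riemannian identity $\mathbf{i}_{dG}(g^{-1})=\nabla G$ recalled just before that lemma. This produces
$$\mathbf{i}_{dG}(\mathbf{T})=\sum_{i,j=1}^k(-1)^{i+j+1}\det\Sigma_{(F_1,...,\hat{F_i},...,F_k)}^{(F_1,...,\hat{F_j},...,F_k)}<\nabla G,\nabla F_i>\nabla F_j+\det\Sigma_{(F_1,...,F_k)}^{(F_1,...,F_k)}\nabla G.$$
The second summand already matches the $\nabla G$ term of \eqref{v0}, so it remains only to identify, for each fixed $j$, the coefficient of $\nabla F_j$.

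For fixed $j$, that coefficient is $\sum_{i=1}^k(-1)^{i+j+1}<\nabla G,\nabla F_i>\det\Sigma_{(F_1,...,\hat{F_i},...,F_k)}^{(F_1,...,\hat{F_j},...,F_k)}$. Invoking the transpose symmetry $\det\Sigma_{(F_1,...,\hat{F_i},...,F_k)}^{(F_1,...,\hat{F_j},...,F_k)}=\det\Sigma_{(F_1,...,\hat{F_j},...,F_k)}^{(F_1,...,\hat{F_i},...,F_k)}$ noted after \eqref{T}, I would rewrite it as $\sum_{i=1}^k(-1)^{i+j+1}<\nabla G,\nabla F_i>\det\Sigma_{(F_1,...,\hat{F_j},...,F_k)}^{(F_1,...,\hat{F_i},...,F_k)}$. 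The key observation is that the $k\times k$ matrix $\Sigma_{(F_1,...,\hat{F_j},...,F_k,G)}^{(F_1,...,F_k)}$ has its $G$-column as its last column, with entry $<\nabla G,\nabla F_i>$ in row $i$, and that deleting row $i$ together with that last column leaves exactly the minor $\Sigma_{(F_1,...,\hat{F_j},...,F_k)}^{(F_1,...,\hat{F_i},...,F_k)}$; hence the inner sum is, up to sign, the cofactor expansion of $\det\Sigma_{(F_1,...,\hat{F_j},...,F_k,G)}^{(F_1,...,F_k)}$ along its last column.

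The delicate point is the sign accounting. The cofactor of the entry in row $i$ and the last ($k$-th) column carries the sign $(-1)^{i+k}$, so the expansion reads $\det\Sigma_{(F_1,...,\hat{F_j},...,F_k,G)}^{(F_1,...,F_k)}=\sum_{i=1}^k(-1)^{i+k}<\nabla G,\nabla F_i>\det\Sigma_{(F_1,...,\hat{F_j},...,F_k)}^{(F_1,...,\hat{F_i},...,F_k)}$. Multiplying by $(-1)^{j+k+1}$ and using $(-1)^{2k}=1$ turns the coefficient of $\nabla F_j$ into $(-1)^{j+k+1}\det\Sigma_{(F_1,...,\hat{F_j},...,F_k,G)}^{(F_1,...,F_k)}$, which is precisely the coefficient of $\nabla F_j$ appearing in \eqref{v0} (after renaming the dummy index $i$ of that sum to $j$). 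Summing over $j$ and restoring the $\nabla G$ term yields $\mathbf{i}_{dG}(\mathbf{T})=\mathbf{v_0}$, completing the proof.
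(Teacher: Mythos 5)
Your proof is correct and is essentially the paper's own argument run in the opposite direction: the paper starts from \eqref{v0} and expands each $\det\Sigma_{(F_1,...,\hat{F_j},...,F_k,G)}^{(F_1,...,F_k)}$ along its $G$-column to arrive at $\mathbf{i}_{dG}(\mathbf{T})$, while you apply $\mathbf{i}_{dG}$ to \eqref{T} and recognize the resulting inner sums as those same cofactor expansions. The ingredients are identical (Lemma \ref{ajutatoare}(i), the identity $\mathbf{i}_{dG}g^{-1}=\nabla G$, the transpose symmetry of the minors, and the $(-1)^{i+k}$ cofactor signs), and your sign bookkeeping checks out.
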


\begin{proof}
By developing the determinants $\det\Sigma_{(F_1,...,\hat{F_j},...,F_k,G)}^
{(F_1,...,F_k)}$ after the last column in the expression \eqref{v0}, we obtain
\begin{eqnarray*}
\mathbf{v_0}&=&\sum_{j=1}^k(-1)^{j+k+1}\det\Sigma_{(F_1,...,\hat{F_j},...,F_k,G)}^
{(F_1,...,F_k)}\nabla F_i+\det\Sigma_{(F_1,...,F_k)}^{(F_1,...,F_k)}\nabla G \\
&=&\sum_{j=1}^k(-1)^{j+k+1}(\sum_{i=1}^k(-1)^{i+k}\det\Sigma_{(F_1,...,\hat{F_j},...,F_k)}^
{(F_1,...,\hat{F_i},...F_k)}<\nabla G,\nabla F_i>)\nabla F_j+\det\Sigma_{(F_1,...,F_k)}^{(F_1,...,F_k)}\nabla G \\
&=&\sum_{i,j=1}^k(-1)^{i+j+1}\det\Sigma_{(F_1,...,\hat{F_i},...,F_k)}^
{(F_1,...,\hat{F_j},...F_k)}<\nabla G,\nabla F_i>\nabla F_j+\det\Sigma_{(F_1,...,F_k)}^{(F_1,...,F_k)}\nabla G \\
&=&\mathbf{i}_{dG}(\mathbf{T}).
\end{eqnarray*}
In the above proof we have used the fact that $\Sigma_{(F_1,...,\hat{F_j},...,F_k)}^
{(F_1,...,\hat{F_i},...F_k)}=(\Sigma_{(F_1,...,\hat{F_i},...,F_k)}^
{(F_1,...,\hat{F_j},...F_k)})^T$.
\end{proof}

The symmetric contravariant 2-tensor $\mathbf{T}$ is degenerate and consequently it cannot be inverted. We will prove that it is invertible when restricted to $\Omega^1_{tan}(M)$. Throughout the remaining of this section we will assume that $\det\Sigma_{(F_1,...,F_k)}^{(F_1,...,F_k)}\neq 0$ on the whole manifold $M$. If this assumption is not true, then we replace $M$ with the open subset of regular points, i.e. $\{x\in M\,|\,\det\Sigma_{(F_1,...,F_k)}^{(F_1,...,F_k)}(x)\neq 0\}$.

Having a Riemannian manifold $(M,g)$, we recall the following well known notions. One can define the following operators $\flat_g:\mathcal{X}(M)\rightarrow \Omega^1(M)$, $\flat_g(X):=g(X,\cdot)$ and $\sharp_g:\Omega^1(M)\rightarrow \mathcal{X}(M)$, $\sharp_g(\alpha):=g^{-1}(\alpha,\cdot)$. The nondegeneracy of the metric tensor $g$ implies $\flat_g=\sharp_g^{-1}$.
For the tensor $\mathbf{T}$, we can define the operator $$\sharp_T:\Omega^1(M)\rightarrow \mathcal{X}(M),\,\,\sharp_T(\alpha):=\mathbf{T}(\alpha,\cdot).$$

We will prove by double inclusion the set equality $\sharp_T(\Omega^1_{tan}(M))=\mathcal{X}_{tan}(M)$. Indeed, for $\alpha\in \Omega^1_{tan}(M)$ and using Proposition \ref{proprietatiT} (iv), we have $dF_s(\sharp_T(\alpha))=\sharp_T(\alpha)(dF_s)=\mathbf{T}(\alpha,dF_s)=0$ which implies that $\sharp_T(\alpha)\in \mathcal{X}_{tan}(M)$. For the other inclusion, let $X_0\in \mathcal{X}_{tan}(M)$ and $\alpha_0:=\frac{1}{\det\Sigma_{(F_1,...,F_k)}^{(F_1,...,F_k)}}\flat_g(X_0)$. We will prove that $\sharp_T(\alpha_0)=X_0$. For this, we first need to show that $\alpha_0$ introduced above is an element in $\Omega^1_{tan}(M)$. We have $\alpha_0(\nabla F_s)=\frac{1}{\det\Sigma_{(F_1,...,F_k)}^{(F_1,...,F_k)}}\flat_g(X_0)(\nabla F_s)=\frac{1}{\det\Sigma_{(F_1,...,F_k)}^{(F_1,...,F_k)}}g(X_0,\nabla F_s)=\frac{1}{\det\Sigma_{(F_1,...,F_k)}^{(F_1,...,F_k)}}dF_s(X_0)$. Because $X_0\in \mathcal{X}_{tan}(M)$, we obtain that $\alpha_0(\nabla F_s)=0,\,\,\forall s=\overline{1,k}$, which implies that $\alpha_0\in \Omega^1_{tan}(M)$.
By direct computation we have
$$\sharp_T(\alpha_0)(\beta)= \mathbf{T}(\alpha_0,\beta)\shortstack[pos]{\small{P.\ref{proprietatiT}\,(ii)}\\\,=\,}\det\Sigma_{(F_1,...,F_k)}^{(F_1,...,F_k)}
g^{-1}(\alpha_0,\beta)=g^{-1}(\flat_g(X_0),\beta)=X_0(\beta),$$
for any $\beta\in \Omega^1(M)$.

The operator $\sharp_T$ is injective on $\Omega^1_{tan}(M)$. Indeed, for $\alpha_1, \alpha_2 \in \Omega^1_{tan}(M)$, suppose that $\sharp_T(\alpha_1)=\sharp_T(\alpha_2)$. This is equivalent with $\mathbf{T}(\alpha_1,\beta)=\mathbf{T}(\alpha_2,\beta)$, for all $\beta\in\Omega^1(M)$. Using Proposition \ref{proprietatiT} (ii), we obtain the equality $\det\Sigma_{(F_1,...,F_k)}^{(F_1,...,F_k)}g^{-1}(\alpha_1,\beta)=
\det\Sigma_{(F_1,...,F_k)}^{(F_1,...,F_k)}g^{-1}(\alpha_2,\beta)$, for all $\beta\in\Omega^1(M)$. By the nondegeneracy of the metric tensor $g$ we obtain that $\alpha_1=\alpha_2$.

The restricted operator $\sharp_T:\Omega^1_{tan}(M)\rightarrow \mathcal{X}_{tan}(M)$ is invertible and consequently we can define the inverse operator $\flat_T:\mathcal{X}_{tan}(M)\rightarrow \Omega^1_{tan}(M)$. From the above considerations we obtain the equality
\begin{equation}\label{flatT}
\flat_T(X)=\frac{1}{\det\Sigma_{(F_1,...,F_k)}^{(F_1,...,F_k)}}\flat_g(X),
\end{equation}
for all $X\in \mathcal{X}_{tan}(M)$.

\begin{lem}\label{BT}
For $\alpha\in \Omega^1(M)$ and $X_0\in \mathcal{X}_{tan}(M)$ we have the equality
$$\mathbf{T}(\alpha,\flat_T(X_0))=\alpha(X_0).$$
\end{lem}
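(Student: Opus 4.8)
The plan is to reduce everything to the already-established formula \eqref{flatT} for $\flat_T$ together with Proposition \ref{proprietatiT}~(ii), exploiting the symmetry of $\mathbf{T}$. The key observation is that although $\alpha\in\Omega^1(M)$ is arbitrary, the second argument $\flat_T(X_0)$ lies in $\Omega^1_{tan}(M)$, since by construction $\flat_T$ takes values in $\Omega^1_{tan}(M)$. Because $\mathbf{T}$ is symmetric, I would first rewrite
$$\mathbf{T}(\alpha,\flat_T(X_0))=\mathbf{T}(\flat_T(X_0),\alpha),$$
which places the tangent one-form in the first slot, exactly where Proposition \ref{proprietatiT}~(ii) applies.

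First I would apply Proposition \ref{proprietatiT}~(ii) with $\flat_T(X_0)\in\Omega^1_{tan}(M)$ and $\alpha\in\Omega^1(M)$ to obtain
$$\mathbf{T}(\flat_T(X_0),\alpha)=\det\Sigma_{(F_1,...,F_k)}^{(F_1,...,F_k)}\,g^{-1}(\flat_T(X_0),\alpha).$$
Next I would substitute the explicit expression \eqref{flatT}, namely $\flat_T(X_0)=\frac{1}{\det\Sigma_{(F_1,...,F_k)}^{(F_1,...,F_k)}}\flat_g(X_0)$, so that the two determinant factors cancel and we are left with $g^{-1}(\flat_g(X_0),\alpha)$.

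Finally, using the definitions $\sharp_g(\beta)=g^{-1}(\beta,\cdot)$ and $\flat_g=\sharp_g^{-1}$, one has $g^{-1}(\flat_g(X_0),\alpha)=\sharp_g(\flat_g(X_0))(\alpha)=X_0(\alpha)=\alpha(X_0)$, which is the desired identity. The computation is a short chain of substitutions, so there is no serious obstacle; the only points requiring care are to justify that $\flat_T(X_0)$ genuinely belongs to $\Omega^1_{tan}(M)$ so that Proposition \ref{proprietatiT}~(ii) becomes applicable after invoking the symmetry of $\mathbf{T}$, and to track the cancellation of the nonvanishing factor $\det\Sigma_{(F_1,...,F_k)}^{(F_1,...,F_k)}$, which is legitimate under the standing assumption that this determinant does not vanish on $M$.
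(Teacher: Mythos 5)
Your proof is correct and follows essentially the same route as the paper's: apply Proposition \ref{proprietatiT}~(ii) with $\flat_T(X_0)\in\Omega^1_{tan}(M)$, substitute the formula \eqref{flatT} so the factor $\det\Sigma_{(F_1,...,F_k)}^{(F_1,...,F_k)}$ cancels, and finish with $g^{-1}(\flat_g(X_0),\alpha)=\alpha(X_0)$. If anything, your version is slightly more careful than the paper's one-line computation, since you explicitly invoke the symmetry of $\mathbf{T}$ to move the tangent one-form into the slot where the proposition applies and you justify that $\flat_T(X_0)$ indeed lies in $\Omega^1_{tan}(M)$.
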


\begin{proof}
By direct computation we have
$$\mathbf{T}(\alpha,\flat_T(X_0))\shortstack[pos]{\small{P.\ref{proprietatiT}\,(ii)}\\\,=\,}\det\Sigma_{(F_1,...,F_k)}^{(F_1,...,F_k)}
g^{-1}(\alpha,\flat_T(X_0))=g^{-1}(\flat_T(X_0),\alpha)=\alpha(X_0).$$
\end{proof}

\begin{defn}
We introduce the symmetric nondegenerate covariant 2-tensor $\mathbf{T}^{-1}:\mathcal{X}_{tan}(M)\times \mathcal{X}_{tan}(M)\rightarrow \mathcal{C}^{\infty}(M)$ defined by
\begin{equation}\label{inversT}
\mathbf{T}^{-1}(X,Y)=\mathbf{T}(\flat_T(X),\flat_T(Y)).
\end{equation}
\end{defn}

On every regular leaf $L_c=\mathbf{F}^{-1}(c)$ we will construct a Riemannian metric $\tau_c$ using the tensor $\mathbf{T}^{-1}$. Let $i_c:L_c\rightarrow M$ be the canonical inclusion of the regular leaf $L_c$ into the manifold $M$. We have the following inclusion $i_{c_*}(\mathcal{X}(L_c))\subset \mathcal{X}_{tan}(M)$.
\begin{defn}
On a regular leaf $L_c$ we define the Riemannian metric $\tau_c:\mathcal{X}(L_c)\times \mathcal{X}(L_c)\rightarrow \mathcal{C}^{\infty}(L_c)$
$$\tau_c(X^c,Y^c):=\mathbf{T}^{-1}(i_{c_*}X^c,i_{c_*}Y^c).$$
\end{defn}

The next result gives the formula for the standard control vector field by using coordinates on the regular leaf $L_c$. More precisely, we prove that $\mathbf{v}_0$ restricted to $L_c$ is a gradient vector field with respect to the Riemannian metric $\tau_c$. Moreover, we prove that this metric is a conformal metric with respect to the first fundamental form induced by the ambient metric $g$ on the submanifold $L_c$.

\begin{thm}\label{caracterizare}
On a regular leaf $L_c$ we have the following characterizations.
\begin{itemize}
\item [(i)] $\tau_c=\frac{1}{\det\Sigma_{(F_1,...,F_k)}^{(F_1,...,F_k)}\circ i_c}i_c^*g$
\item [(ii)] The standard control vector field $\mathbf{v_0}$ is a vector field in $\mathcal{X}_{tan}(M)$. Moreover, there exists a vector field $\mathbf{v}_0^c\in \mathcal{X}(L_c)$ such that $i_{c_*}(\mathbf{v}_0^c)=\mathbf{v}_{0 |L_c}$, where $\mathbf{v}_{0 |L_c}$ is the restriction of $\mathbf{v}_0$ to the submanifold $i_c(L_c)$.
\item [(iii)] $\mathbf{v_0}_{|L_c}=i_{c_*}\nabla_{\tau_c}(G\circ i_c)$.
\end{itemize}
\end{thm}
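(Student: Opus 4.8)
The plan is to treat the three items in order, since (i) supplies the explicit form of $\tau_c$ that drives the computation in (iii), while (ii) guarantees that the objects appearing in (iii) are well defined. Throughout I would abbreviate $D:=\det\Sigma_{(F_1,...,F_k)}^{(F_1,...,F_k)}$, which by the standing assumption of this section is nowhere zero.

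For (i), I would start from the definition $\tau_c(X^c,Y^c)=\mathbf{T}^{-1}(i_{c_*}X^c,i_{c_*}Y^c)$ and unfold $\mathbf{T}^{-1}$ via \eqref{inversT}, namely $\mathbf{T}^{-1}(X,Y)=\mathbf{T}(\flat_T(X),\flat_T(Y))$. Since $i_{c_*}X^c,i_{c_*}Y^c\in\mathcal{X}_{tan}(M)$, Lemma \ref{BT} (applied with $\alpha=\flat_T(i_{c_*}X^c)$ and $X_0=i_{c_*}Y^c$) collapses this to $\flat_T(i_{c_*}X^c)(i_{c_*}Y^c)$, and \eqref{flatT} rewrites $\flat_T$ as $\tfrac{1}{D}\flat_g$. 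This gives $\mathbf{T}^{-1}(X,Y)=\tfrac{1}{D}g(X,Y)$ on $\mathcal{X}_{tan}(M)$, and pulling back along $i_c$ yields exactly $\tau_c=\tfrac{1}{D\circ i_c}\,i_c^*g$. This step is a routine unwinding of definitions.

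For (ii), I would invoke Theorem \ref{v0-cu-i} to write $\mathbf{v}_0=\mathbf{i}_{dG}(\mathbf{T})$, so that $dF_s(\mathbf{v}_0)=\mathbf{T}(dG,dF_s)$ for every $s$. By the symmetry of $\mathbf{T}$ together with Proposition \ref{proprietatiT}(iv) this vanishes, hence $\mathbf{v}_0\in\mathcal{X}_{tan}(M)$. Because $c$ is a regular value, $T_xL_c=\{v\in T_xM\,|\,dF_s(v)=0,\ s=\overline{1,k}\}$, so $\mathbf{v}_0(x)\in i_{c_*}(T_xL_c)$ for every $x\in L_c$; since $i_{c_*}$ is a fiberwise isomorphism onto this subspace, the restriction $\mathbf{v}_{0|L_c}$ pulls back to a unique smooth field $\mathbf{v}_0^c\in\mathcal{X}(L_c)$ with $i_{c_*}\mathbf{v}_0^c=\mathbf{v}_{0|L_c}$. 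For (iii), I would verify that $\mathbf{v}_0^c$ satisfies the defining variational identity of the $\tau_c$-gradient, namely $\tau_c(\mathbf{v}_0^c,Y^c)=d(G\circ i_c)(Y^c)$ for all $Y^c\in\mathcal{X}(L_c)$. Using (i) and $d(G\circ i_c)(Y^c)=dG(i_{c_*}Y^c)$, this reduces to showing $\tfrac{1}{D}g(\mathbf{v}_0,Z)=dG(Z)$ for every $Z\in\mathcal{X}_{tan}(M)$. The decisive computation is to expand $\mathbf{v}_0=\mathbf{i}_{dG}(\mathbf{T})$ via the defining formula \eqref{T} and Lemma \ref{ajutatoare}(i): the terms coming from $\nabla F_i\otimes\nabla F_j$ contribute multiples of $\nabla F_j$, and pairing with a tangent vector $Z$ annihilates them since $g(\nabla F_j,Z)=dF_j(Z)=0$, so only the $D\,\nabla G$ term survives, giving $g(\mathbf{v}_0,Z)=D\,dG(Z)$. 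Substituting back closes the identity and identifies $\mathbf{v}_0^c=\nabla_{\tau_c}(G\circ i_c)$, i.e. $\mathbf{v}_{0|L_c}=i_{c_*}\nabla_{\tau_c}(G\circ i_c)$.

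The main obstacle — more a matter of care than of genuine difficulty — is this last computation in (iii): one must confirm that it is precisely the $\nabla F_j$ directions that are killed against tangent test vectors, so that the normal part of $\mathbf{v}_0$ plays no role and the surviving $\nabla G$ term exactly reproduces $dG$ up to the factor $D$. Everything else is bookkeeping with the operators $\flat_T,\sharp_T$ and the regular-value structure of $L_c$.
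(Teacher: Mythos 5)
Your proof is correct, and its overall skeleton matches the paper's: (i) by unwinding $\mathbf{T}^{-1}$ through \eqref{inversT} and \eqref{flatT}, (ii) from tangency of $\mathbf{v}_0$ plus the fact that $i_c$ is a diffeomorphism onto its image, and (iii) by verifying the variational identity defining the $\tau_c$-gradient and invoking nondegeneracy. The differences are in execution. In (ii), the paper simply recalls that $\mathbf{v}_0$ solves \eqref{problema} by construction (its first $k$ equations are exactly $dF_s(\mathbf{v}_0)=0$), whereas you re-derive tangency from Theorem \ref{v0-cu-i}, the symmetry of $\mathbf{T}$, and Proposition \ref{proprietatiT}(iv); both are valid, the paper's being shorter. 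The genuine divergence is in (iii): the paper does not use (i) there at all, but computes $\mathbf{T}^{-1}(\mathbf{v}_{0|L_c},i_{c_*}Y^c)$ entirely within the operator calculus --- Proposition \ref{proprietatiT}(ii) and \eqref{flatT} to strip the $\flat_T$'s, Theorem \ref{v0-cu-i} to replace $\mathbf{v}_0$ by $\mathbf{i}_{dG}\mathbf{T}$, and Lemma \ref{BT} to collapse $\mathbf{T}(dG,\flat_T(i_{c_*}Y^c))$ to $dG(i_{c_*}Y^c)$. You instead route (iii) through (i), reducing the identity to $\frac{1}{\det\Sigma_{(F_1,...,F_k)}^{(F_1,...,F_k)}}\,g(\mathbf{v}_0,Z)=dG(Z)$ for tangent $Z$, and verify it by expanding $\mathbf{v}_0$ and annihilating the $\nabla F_j$ terms against $Z$. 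Your version is more concrete: it makes visible that, tested against tangent directions, $\mathbf{v}_0$ is just $\det\Sigma_{(F_1,...,F_k)}^{(F_1,...,F_k)}\nabla G$ --- in effect the scaled-projection picture that the paper only establishes later in Theorem \ref{v0-proiector} --- and it could even bypass Theorem \ref{v0-cu-i} by pairing formula \eqref{v0} directly with $Z$. What the paper's route buys is that (iii) stays independent of (i) and never requires writing out the expansion of $\mathbf{v}_0$, keeping the whole argument inside the $\flat_T$/$\mathbf{T}^{-1}$ formalism.
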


\begin{proof}
{\it (i)} We have the following computations
\begin{eqnarray*}
\tau_c(X^c,Y^c)&=&\mathbf{T}^{-1}(i_{c_*}X^c,i_{c_*}Y^c)\shortstack[pos]{\small{\eqref{inversT}}\\\,=\,}
\mathbf{T}(\flat_T(i_{c_*}X^c),\flat_T(i_{c_*}Y^c)) \\
&\shortstack[pos]{\small{P.\ref{proprietatiT}\,(ii)}\\\,=\,}&
\det\Sigma_{(F_1,...,F_k)}^{(F_1,...,F_k)}g^{-1}(\flat_T(i_{c_*}X^c),\flat_T(i_{c_*}Y^c)) \\
&\shortstack[pos]{\small{\eqref{flatT}}\\\,=\,}&\det\Sigma_{(F_1,...,F_k)}^{(F_1,...,F_k)}
g^{-1}(\frac{1}{\det\Sigma_{(F_1,...,F_k)}^{(F_1,...,F_k)}}\flat_g(i_{c_*}X^c),
\frac{1}{\det\Sigma_{(F_1,...,F_k)}^{(F_1,...,F_k)}}\flat_g(i_{c_*}Y^c)) \\
&=& \frac{1}{\det\Sigma_{(F_1,...,F_k)}^{(F_1,...,F_k)}}g(i_{c_*}X^c,i_{c_*}Y^c)
=\frac{1}{\det\Sigma_{(F_1,...,F_k)}^{(F_1,...,F_k)}\circ i_c}i_c^*g(X^c,Y^c).
\end{eqnarray*}

{\it (ii)} By construction, the standard control vector field $\mathbf{v}_0$ is a solution of \eqref{problema}, which implies that $\mathbf{v}_0\in \mathcal{X}_{tan}(M)$. As $i_c:L_c\rightarrow i_c(L_c)\subset M$ is a diffeomorphism, there exists a vector field $\mathbf{v}_0^c\in \mathcal{X}(L_c)$ such that $i_{c_*}(\mathbf{v}_0^c)=\mathbf{v}_{0 |L_c}$.

{\it (iii)} By the definition of a gradient vector field we have
$$d(G\circ i_c)(Y^c)=\tau_c(\nabla _{\tau_c}(G\circ i_c),Y^c),\,\,\forall Y^c\in \mathcal{X}(L_c).$$
which is equivalent with the equality
$$dG(i_{c_*}Y^c)=\mathbf{T}^{-1}(i_{c_*}\nabla _{\tau_c}(G\circ i_c),i_{c_*}Y^c),\,\,\forall Y^c\in \mathcal{X}(L_c).$$
We have the following computations
$$\mathbf{T}^{-1}(\mathbf{v}_{0 |L_c},i_{c_*}Y^c)=\mathbf{T}(\flat_T(\mathbf{v}_{0 |L_c}),\flat_T(i_{c_*}Y^c))=
g^{-1}(\flat_g(\mathbf{v}_{0 |L_c}),\flat_T(i_{c_*}Y^c))=\mathbf{v}_{0 |L_c}(\flat_T(i_{c_*}Y^c))$$
$$\shortstack[pos]{\small{T.\ref{v0-cu-i}}\\\,=\,}\mathbf{i}_{dG}\mathbf{T}(\flat_T(i_{c_*}Y^c))=\mathbf{T}(dG,\flat_T(i_{c_*}Y^c))
\shortstack[pos]{\small{L.\ref{BT}}\\\,=\,}dG(i_{c_*}Y^c).$$
Consequently we have the equality
$$\mathbf{T}^{-1}(i_{c_*}\nabla _{\tau_c}(G\circ i_c),i_{c_*}Y^c)=\mathbf{T}^{-1}(\mathbf{v}_{0 |L_c},i_{c_*}Y^c),\,\,\forall Y^c\in \mathcal{X}(L_c),$$
or equivalently
$$\mathbf{T}^{-1}(i_{c_*}(\nabla _{\tau_c}(G\circ i_c)-\mathbf{v}_0^c),i_{c_*}Y^c)=0,\,\,\forall Y^c\in \mathcal{X}(L_c).$$
And by the definition of $\tau_c$ we obtain
$$\tau_c(\nabla _{\tau_c}(G\circ i_c)-\mathbf{v}_0^c,Y^c)=0,\,\,\forall Y^c\in \mathcal{X}(L_c).$$
By the nondegeneracy of the Riemannian metric tensor $\tau_c$ we obtain $\nabla _{\tau_c}(G\circ i_c)=\mathbf{v}_0^c$ or equivalently $i_{c_*}\nabla _{\tau_c}(G\circ i_c)=\mathbf{v_0}_{|L_c}$.
\end{proof}

For the case when we have only one conserved quantity $F$, i.e. $k=1$ and $F_1=F$, we obtain that $$\tau_c=\frac{1}{||\nabla F||^2\circ i_c}i_c^*g.$$

\section{The projection method formulation of the standard control vector field}

In this section we prove that the standard control vector field $\mathbf{v_0}$ can be written as a scaled orthogonal projection on the regular leaves $L_c$ of the gradient vector field $\nabla G$. Using this formulation we study the connection between the standard control vector field generated by sets of functionally dependent conserved quantities.

\begin{defn}
Let $W$ be a subspace of a finite dimensional inner product space $(V,<\cdot,\cdot >)$. Denote by $W^{\perp}$ the orthogonal complement of $W$ in $V$. Define $P_W:V\rightarrow V$ by
$$P_W(\mathbf{v})=\mathbf{w},$$
where $\mathbf{v}=\mathbf{w}+\mathbf{u}$ with $\mathbf{w}\in W$ and $\mathbf{u}\in W^{\perp}$.

The linear operator $P_W$ is called the orthogonal projection of $V$ onto $W$ along $W^{\perp}$.
\end{defn}

For $x\in M$ we consider the inner product space $(T_xM,<\cdot,\cdot >)$, where the inner product is generated by the Riemannian metric $g$. For regular points of the function $\mathbf{F}=(F_1,...,F_k):M\rightarrow \mathbb{R}^k$ we consider the subspace $T_xL_c=Ker D\mathbf{F}(x)$. Consequently, we have $T_xL_c^{\perp}=Sp[\nabla F_1(x),...,\nabla F_k(x)]$. Indeed, for $\mathbf{y}\in Ker D\mathbf{F}(x)$ we have $<\mathbf{y},\nabla F_s(x)>=g_{ij}(x)y^ig^{aj}(x)\frac{\partial F_s}{\partial x^a}(x)=\delta_i^ay^i\frac{\partial F_s}{\partial x^a}(x)=y^a\frac{\partial F_s}{\partial x^a}(x)=0$ for any $s=\overline{1,k}$.

We define the following linear operator $P_{T_xL_c}:T_xM\rightarrow T_xM$
\begin{equation}\label{proiector}
P_{T_xL_c}(\mathbf{v})=\frac{1}{\det\Sigma_{(F_1,...,F_k)}^{(F_1,...,F_k)}(x)}\det
\left(%
\begin{array}{cccc}
  <\nabla F_1(x),\nabla F_{1}(x)> & ... & <\nabla F_k(x),\nabla F_{1}(x)> & <\mathbf{v},\nabla F_{1}(x)> \\
  ... & ... & ... & ... \\
  <\nabla F_1(x),\nabla F_{k}(x)> & ... & <\nabla F_k(x),\nabla F_{k}(x)> & <\mathbf{v},\nabla F_{k}(x)>\\
\nabla F_1(x) & ... & \nabla F_k(x) & \mathbf{v} \\
\end{array}%
\right),
\end{equation}
where $\mathbf{v} \in T_xM$.
The operator $P_{T_xL_c}$ is the orthogonal projection of $T_xM$ onto $T_xL_c$ along $T_xL_c^{\perp}$. Indeed, we observe that $P_{T_xL_c}(\nabla F_i(x))=0$, for all $i=\overline{1,k}$, as two columns in the determinant become equal. Consequently, $P_{T_xL_c}(\mathbf{u})=0$ for any $\mathbf{u}\in T_xL_c^{\perp}$. For $\mathbf{w}\in T_xL_c$ we have
$$P_{T_xL_c}(\mathbf{w})=\frac{1}{\det\Sigma_{(F_1,...,F_k)}^{(F_1,...,F_k)}(x)}\det
\left(%
\begin{array}{cccc}
  <\nabla F_1(x),\nabla F_{1}(x)> & ... & <\nabla F_k(x),\nabla F_{1}(x)> & 0 \\
  ... & ... & ... & ... \\
  <\nabla F_1(x),\nabla F_{k}(x)> & ... & <\nabla F_k(x),\nabla F_{k}(x)> & 0\\
\nabla F_1(x) & ... & \nabla F_k(x) & \mathbf{w} \\
\end{array}%
\right)=\mathbf{w}.$$
\medskip
Also, the standard control vector field $\mathbf{v}_0$, defined by the equation \eqref{v0}, can be formally written as
\begin{equation}
\mathbf{v}_0(x)=\det \left(%
\begin{array}{cccc}
  <\nabla F_1(x),\nabla F_{1}(x)> & ... & <\nabla F_k(x),\nabla F_{1}(x)> & <\nabla G(x),\nabla F_1(x)> \\
  ... & ... & ... & ... \\
  <\nabla F_1(x),\nabla F_{k}(x)> & ... & <\nabla F_k(x),\nabla F_{k}(x)> & <\nabla G(x),\nabla F_k(x)>\\
\nabla F_1(x) & ... & \nabla F_k(x) & \nabla G(x) \\
\end{array}%
\right)
\end{equation}
From the above considerations we can conclude the following result.

\begin{thm}\label{v0-proiector}
The standard control vector field can be written as a scaled orthogonal projection on the regular leaves $L_c$ of the gradient vector field $\nabla G$, i.e. for $x\in L_c$, we have
$$\mathbf{v}_0(x)=\det\Sigma_{(F_1,...,F_k)}^{(F_1,...,F_k)}(x) P_{T_xL_c} (\nabla G(x)).$$
\end{thm}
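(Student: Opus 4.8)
The plan is to prove the identity by a direct comparison of two formal determinant representations, with no analysis required beyond cofactor bookkeeping. First I would place side by side the two $(k+1)\times(k+1)$ formal determinants already written down just above the statement: the one defining the projection operator $P_{T_xL_c}(\mathbf{v})$ in \eqref{proiector}, and the one representing $\mathbf{v}_0(x)$. The key observation is that these two matrices share an identical block structure. In both cases the upper-left $k\times k$ block is the Gram matrix $\Sigma_{(F_1,...,F_k)}^{(F_1,...,F_k)}(x)$, the bottom row is $(\nabla F_1(x),...,\nabla F_k(x),\ast)$ where $\ast$ is a single vector entry, and the last column above that entry records the inner products of $\ast$ against $\nabla F_1(x),...,\nabla F_k(x)$. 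The only structural difference is the prefactor: the projection determinant carries $\frac{1}{\det\Sigma_{(F_1,...,F_k)}^{(F_1,...,F_k)}(x)}$, whereas the $\mathbf{v}_0$ determinant carries none.

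Second, I would substitute $\mathbf{v}=\nabla G(x)$ into \eqref{proiector}. With this choice the vector entry $\ast$ becomes $\nabla G(x)$ and the last-column scalars become $<\nabla G(x),\nabla F_s(x)>$ for $s=\overline{1,k}$. The determinant appearing inside $P_{T_xL_c}(\nabla G(x))$ then coincides entry for entry with the determinant representing $\mathbf{v}_0(x)$. Consequently $P_{T_xL_c}(\nabla G(x))=\frac{1}{\det\Sigma_{(F_1,...,F_k)}^{(F_1,...,F_k)}(x)}\mathbf{v}_0(x)$ at every $x\in L_c$, where $\det\Sigma_{(F_1,...,F_k)}^{(F_1,...,F_k)}(x)\neq 0$ since $x$ is a regular point of $\mathbf{F}$. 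Multiplying both sides by $\det\Sigma_{(F_1,...,F_k)}^{(F_1,...,F_k)}(x)$ yields the asserted formula.

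The argument has no genuinely hard step; it is a matching of two determinants. The one place demanding care, and the only point where a sign error could creep in, is confirming that the formal determinant used for $\mathbf{v}_0$ really equals the definition \eqref{v0}. For this I would expand the formal determinant along its bottom row of vectors: the entry $\nabla G(x)$ contributes the minor $\det\Sigma_{(F_1,...,F_k)}^{(F_1,...,F_k)}(x)$ with sign $+1$, producing the term $\det\Sigma_{(F_1,...,F_k)}^{(F_1,...,F_k)}(x)\nabla G(x)$, while each $\nabla F_i(x)$ contributes the minor $\det\Sigma_{(F_1,...,\widehat{F_i},...,F_k,G)}^{(F_1,...,F_k)}(x)$ with sign $(-1)^{(k+1)+i}=(-1)^{i+k+1}$. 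These reproduce exactly the two summands of \eqref{v0}, which closes the verification and hence the proof.
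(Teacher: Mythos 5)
Your proposal is correct and follows essentially the same route as the paper: the paper's proof is precisely the observation that substituting $\mathbf{v}=\nabla G(x)$ into the determinant formula \eqref{proiector} reproduces, up to the prefactor $\frac{1}{\det\Sigma_{(F_1,...,F_k)}^{(F_1,...,F_k)}(x)}$, the formal determinant representing $\mathbf{v}_0(x)$. Your final cofactor expansion along the bottom row (with signs $(-1)^{(k+1)+i}$ recovering \eqref{v0}) makes explicit a step the paper only asserts with the phrase ``can be formally written as,'' so if anything your write-up is slightly more complete.
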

\medskip

In what follows we study the connection between the standard control vector fields generated by two sets of functionally dependent conservation laws. More precisely, let
\begin{equation*}\label{}
    \left\{
      \begin{array}{ll}
        H_1=h_1(F_1,...,F_k) \\
        ... \\
         H_k=h_k(F_1,...,F_k)
      \end{array},
    \right.
\end{equation*}
where $h=(h_1,...,h_k):\mathbb{R}^k\rightarrow \mathbb{R}^k$ is a local diffeomorphism. By a straightforward computation we have the following equality, see \eqref{sigma},
$$\Sigma_{(F_1,...,F_k)}^{(F_1,...,F_k)}=\frac{\partial(F_1,...,F_k)}{\partial (x_1,...,x_n)}[g^{ij}](\frac{\partial(F_1,...,F_k)}{\partial (x_1,...,x_n)})^T.$$
By the same type of computation we have
\begin{eqnarray}
\Sigma_{(H_1,...,H_k)}^{(H_1,...,H_k)}&=&\frac{\partial(H_1,...,H_k)}{\partial (x_1,...,x_n)}[g^{ij}](\frac{\partial(H_1,...,H_k)}{\partial (x_1,...,x_n)})^T\nonumber \\
&=&\frac{\partial(h_1,...,h_k)}{\partial (F_1,...,F_k)}\frac{\partial(F_1,...,F_k)}{\partial (x_1,...,x_n)}[g^{ij}](\frac{\partial(F_1,...,F_k)}{\partial (x_1,...,x_n)})^T(\frac{\partial(h_1,...,h_k)}{\partial (F_1,...,F_k)})^T\nonumber \\
&=&\frac{\partial(h_1,...,h_k)}{\partial (F_1,...,F_k)}\Sigma_{(F_1,...,F_k)}^{(F_1,...,F_k)}(\frac{\partial(h_1,...,h_k)}{\partial (F_1,...,F_k)})^T.
\end{eqnarray}
We obtain the following equality
$$\det\Sigma_{(H_1,...,H_k)}^{(H_1,...,H_k)}(x)=\left(\det \frac{\partial(h_1,...,h_k)}{\partial (F_1,...,F_k)}(c)\right )^2\det\Sigma_{(F_1,...,F_k)}^{(F_1,...,F_k)}(x),$$
where $c\in \mathbb{R}^k$ with $c=(F_1(x),...,F_k(x))$. Consequently, on a regular leaf $L_c$ the standard control vector fields $\mathbf{v}_0^{(H_1,...,H_k)}$ generated by the set of conserved quantities $(H_1,...,H_k)$ and  respectively $\mathbf{v}_0^{(F_1,...,F_k)}$ generated by the set of conserved quantities $(F_1,...,F_k)$ differ by a constant. More precisely,
$$\mathbf{v}_{0|L_c}^{(H_1,...,H_k)}=\left(\det \frac{\partial(h_1,...,h_k)}{\partial (F_1,...,F_k)}(c)\right )^2 \mathbf{v}_{0|L_c}^{(F_1,...,F_k)}.$$

\section{Examples}
In this section we prove that the Landau-Lifschitz equation is a perturbed system which can be put in the form given by the equation \eqref{perturbat} with the perturbation being a standard control vector field.
This perturbation can also be formulated as a double bracket dissipation, see \cite{marsden-92}, \cite{bloch-krishnaprasad-marsden-ratiu}, \cite{ortega-planas}.
Also, we show that the cubic dissipation of the rigid body introduced by Morrison \cite{morrison-86} is again of the form given by a standard control vector field.  \vspace{2mm}

{\bf The Landau-Lifschitz equation}

One of the main objectives of the micromagnetics theory is to develop a formalism in which the macroscopic properties of a material can be simulated including the best approximation to the fundamental atomic behavior of the material.
The history of micromagnetics starts with a paper of Landau and Lifschitz, published in 1935, on the structure of a wall between two antiparallel domains.

The Landau-Lifschitz equation of motion for an individual spin has the form
\begin{equation}\label{L-L}
\dot{M}=\gamma M\times B+\frac{\lambda}{||M||^2}(M\times(M\times B)),
\end{equation}
where $M$ is the magnetization vector, $B$ is the magnetic field, $\gamma$ is the gyromagnetic ratio and $\lambda$ is the damping constant. Due to physical reasoning, we assume that the magnetic field $\gamma B$ is of potential type, i.e. $\gamma B=\nabla H$ for a smooth function $H:\mathbb{R}^3\diagdown \{(0,0,0)\}\rightarrow \mathbb{R}$. Also, we suppose that $\frac{\lambda}{\gamma}>0$. The phase-space of the problem is $\mathbb{R}^3\diagdown \{(0,0,0)\}$ endowed with the Lie-Poisson bracket given by the cross product.
An equivalent form of L-L equation the \eqref{L-L} is given by
\begin{equation}\label{LL}
\dot{M}=M\times \nabla H+\frac{\lambda}{\gamma ||M||^2}<M, \nabla H>M-\frac{\lambda}{\gamma} \nabla H.
\end{equation}

The unperturbed system $\dot{M}=M\times \nabla H$ conserves the Hamiltonian function $H$ and the Casimir function $C_0=\frac{1}{2} (M_1^2+M_2^2+M_3^2)$. We prove that the perturbation $$\frac{\lambda}{\gamma ||M||^2}<M, \nabla H>M-\frac{\lambda}{\gamma} \nabla H$$ is the standard control vector field $\mathbf{v}_0$ with the conserved function being $F=\sqrt{\frac{2\lambda}{\gamma}C_0}$ and the dissipated function being $G=-H$.

Using Remark \ref{T-expresie1F}, in the case of one conserved quantity, the symmetric contravariant 2-tensor $\mathbf{T}$ is given by
\begin{equation}\label{T3}
\mathbf{T}=-\nabla F\otimes \nabla F+||\nabla F||^2g^{-1},
\end{equation}
where $g$ is the Euclidean metric on $\mathbb{R}^3$. Using Theorem \ref{v0-cu-i} we obtain
\begin{eqnarray}\label{LLv0}
\mathbf{v_0}&=&\mathbf{i}_{dG}(\mathbf{T})=-<\nabla G,\nabla F>\nabla F+||\nabla F||^2\nabla G \nonumber \\
&=& <\nabla H,\sqrt{\frac{\lambda}{\gamma}}\frac{M}{||M||}>\sqrt{\frac{\lambda}{\gamma}}\frac{M}{||M||}-\frac{\lambda}{\gamma}\nabla H=\frac{\lambda}{\gamma ||M||^2}<M, \nabla H>M-\frac{\lambda}{\gamma} \nabla H.
\end{eqnarray}
We analyze the standard control vector field $\mathbf{v}_0$ given by the equation \eqref{LLv0} restricted on a regular leaf $L_c$.
In Cartesian coordinates we have the following expression of the symmetric contravariant  2-tensor field
$$\mathbf{T}=\frac{\lambda}{2\gamma C_0}(\sum_{i=1}^3(M_1^2+M_2^2+M_3^2-M_i^2)\frac{\partial}{\partial M_i}\otimes \frac{\partial}{\partial M_i}-\sum_{i,j=1,\,i\neq j}^3M_iM_j\frac{\partial}{\partial M_i}\otimes \frac{\partial}{\partial M_j})$$
Because $F$ is a constant of motion for the perturbed system \eqref{LL}, the regular leaves which are given by the spheres $L_c:=F^{-1}(c)$ are preserved by this perturbed dynamic. In spherical coordinates $(\theta, \varphi, r)$ we have
$M_1=r\sin\theta \cos\varphi,\,\,M_2=r\sin\theta \sin\varphi,\,\,M_3=r\cos\theta$. The symmetric contravariant  2-tensor field $\mathbf{T}(\theta,\varphi,r)$ becomes
$$\mathbf{T}=\frac{\lambda}{\gamma r^2}(\frac{\partial}{\partial \theta}\otimes \frac{\partial}{\partial \theta}+\frac{1}{\sin^2\theta}\frac{\partial}{\partial \varphi}\otimes \frac{\partial}{\partial \varphi})$$
and it is a degenerate tensor field.
If we choose a sphere $L_c$, where $r=\sqrt{\frac{\gamma}{\lambda}}c$, then $\mathbf{T}_{|L_c}$ becomes a nondegenerate symmetric contravariant  2-tensor field on $L_c$. We have the coordinate expression
$$\mathbf{T}_{|L_c}=\frac{\lambda^2}{\gamma ^2c^2}(\frac{\partial}{\partial \theta}\otimes \frac{\partial}{\partial \theta}+\frac{1}{\sin^2\theta}\frac{\partial}{\partial \varphi}\otimes \frac{\partial}{\partial \varphi}).$$
Consequently,
$$\tau_c=\frac{\gamma ^2c^2}{\lambda^2} (d\theta\otimes d\theta+\sin^2\theta d\varphi\otimes d\varphi).$$
According to Theorem \ref{caracterizare} $(iii)$, the standard control vector field on the sphere $L_c$  is a gradient vector field and it has the expression
$$\mathbf{v_0}_{|L_c}=-\frac{\lambda^2}{\gamma ^2c^2}\frac{\partial H_{|L_c}}{\partial \theta}\frac{\partial}{\partial \theta}-\frac{\lambda^2}{\gamma ^2c^2\sin^2\theta}\frac{\partial H_{|L_c}}{\partial \varphi}\frac{\partial}{\partial \varphi}.$$
\begin{rem}
The induced metric on the sphere $L_c$ is given by
$$i_c^*g=\frac{\gamma}{\lambda}c^2  (d\theta\otimes d\theta+\sin^2\theta d\varphi\otimes d\varphi)$$
and consequently, we have
$$\frac{1}{||\nabla F||^2\circ i_c}i_c^*g=\frac{\gamma ^2c^2}{\lambda^2} (d\theta\otimes d\theta+\sin^2\theta d\varphi\otimes d\varphi).$$
As stated in Theorem \ref{caracterizare} $(i)$, we obtain the equality $\tau_c=\frac{1}{||\nabla F||^2\circ i_c}i_c^*g$.
\begin{flushright}
$\bigtriangleup$
\end{flushright}
\end{rem}

We can also write the standard control vector field $\mathbf{v_0}$ by using the orthogonal projector defined by the formula \eqref{proiector}. More precisely,
$$P_{T_xL_c}\nabla G=\frac{1}{||\nabla F||^2}\det \left(%
\begin{array}{cc}
  <\nabla F,\nabla F> & <\nabla G,\nabla F> \\
\nabla F & \nabla G \\
\end{array}%
\right)
=\nabla G-\frac{1}{||\nabla F||^2}<\nabla F,\nabla G>\nabla F.$$
By using Remark \ref{T-expresie1F} and Theorem \ref{v0-proiector}, we obtain
$$\mathbf{v_0}=\frac{\lambda}{\gamma ||M||^2}<M, \nabla H>M-\frac{\lambda}{\gamma} \nabla H.$$

The Hamiltonian function $H$ decreases along the solutions of the Landau-Lifschitz equation as $G$ is an increasing function along these solutions, see Theorem \ref{crescator}.
\vspace{2mm}

{\bf The metriplectic dissipation of the rigid body}

The motion of a rigid body can be reduced to the translation of
the center of mass and the rotation about the center of mass. Rotation is conveniently
described in a coordinate system with the origin at the center of
mass and the axes along the principal central axes of inertia, by
Euler's equations. These equations can be written in the following
form
$$\left\{%
\begin{array}{ll}
    \dot{x_1}=(\frac{1}{I_3}-\frac{1}{I_2})x_2x_3+u_1 \\
    \dot{x_2}=(\frac{1}{I_1}-\frac{1}{I_3})x_1x_3+u_2\\
    \dot{x_3}=(\frac{1}{I_2}-\frac{1}{I_1})x_1x_2+u_3\\
\end{array}%
\right.$$ where
$x_1=I_1\omega_1,\,x_2=I_2\omega_2,\,x_3=I_3\omega_3$ are the
components of the angular momentum vector, and $I_1>I_2>I_3$ are the principal moments of
inertia, and $\omega_1,\omega_2,\omega_3$ are the components of the
angular velocity and $u_1,u_2,u_3$ are the components of applied
torques.
The system of free rotations has the well known Hamilton-Poisson formulation $(so(3)^*,\Pi_{-},H),$
where $\Pi_{-}$ is the minus Lie-Poisson structure on
$so(3)^*$ and the Hamiltonian function is given by
$H(x_1,x_2,x_3)=\frac{1}{2}(\frac{x_1^2}{I_1}+\frac{x_2^2}{I_2}+\frac{x_3^2}{I_3})$, see \cite{marsden-ratiu}.
In \cite{morrison-86}, Morrison has introduced the following class of metriplectic dissipation for the rigid body
$${\bf u}=[h^{ij}]\nabla C,$$
where $C$ is a Casimir of $\Pi_{-}$ and
$$[h^{ij}](x)=\left(%
\begin{array}{ccc}
  \frac{x_2^2}{I_2^2}+\frac{x_3^2}{I_3^2} & -\frac{x_1 x_2}{I_1 I_2} & -\frac{x_1 x_3}{I_1 I_3} \\
  -\frac{x_1 x_2}{I_1 I_2} & \frac{x_1^2}{I_1^2}+\frac{x_3^2}{I_3^2} & -\frac{x_2 x_3}{I_2 I_3} \\
  -\frac{x_1 x_3}{I_1 I_3} & -\frac{x_2 x_3}{I_2 I_3} & \frac{x_1^2}{I_1^2}+\frac{x_2^2}{I_2^2} \\
\end{array}%
\right).$$
Next, we prove that the above control vector field ${\bf u}$ is the standard control vector field generated by the conserved function $F=H$ and the dissipated function $G=C$.
As before, by using the expression \eqref{T-cu1F} of the tensor $\mathbf{T}$ and Theorem \ref{v0-cu-i} we have for this case
$${\bf v}_0={\bf i}_{dG}(\mathbf{T})=(-\nabla H\otimes\nabla H+||\nabla H||^2\mathbb{I})\nabla C=[h^{ij}]\nabla C.$$
The above form of the dissipation vector field was also found in \cite{birtea-comanescu}. For a fixed $x\in \mathbb{R}^3$, the matrix associated with the linear operator $||\nabla H(x)||^2P_{T_xL_c}$ is the matrix $[h^{ij}](x)$.

In what follows we compute the standard control vector field ${\bf v_0}$ on the regular leaves $L_c:=H^{-1}(c)$. A system of local coordinates $(\theta,\varphi,r)$ for the ellipsoid $L_c$ is given by
$$x_1=r\sqrt{I_1}\sin\theta\cos\varphi,\,\,x_2=r\sqrt{I_2}\sin\theta\sin\varphi,\,\,x_3=r\sqrt{I_3}\cos\theta,$$
where $r=\sqrt{2c}$. For this choice of parameters we have
$$||\nabla H||^2=2c(\frac{\sin^2\theta\cos^2\varphi}{I_1}+\frac{\sin^2\theta\sin^2\varphi}{I_2}+\frac{\cos^2\theta}{I_3})$$
and the induced metric
\begin{eqnarray*}
i^*_cg &=&2c(I_1\cos^2\theta\cos^2\varphi+I_2\cos^2\theta\sin^2\varphi+I_3\sin^2\theta)d\theta\otimes d\theta\nonumber \\
& &+2c(I_2-I_1)\sin\theta\cos\theta\sin\varphi\cos\varphi(d\theta\otimes d\varphi+d\varphi\otimes d\theta)
\nonumber \\
& &+2c\sin^2\theta(I_1\sin^2\varphi+I_2\cos^2\varphi) d\varphi\otimes d\varphi.
\end{eqnarray*}
For the standard Casimir function $C_0=\frac{1}{2}(x_1^2+x_2^2+x_3^2)$ and by Theorem \ref{caracterizare} $(iii)$, the standard control vector field on the ellipsoid $L_c$  is a gradient vector field which has the expression
$${\bf v_0}_{|L_c}=2c\sin\theta\cos\theta(\frac{1}{I_3}-\frac{\sin^2\varphi}{I_2}-\frac{\cos^2\varphi}{I_1})
\frac{\partial}{\partial \theta}+2c(\frac{1}{I_1}-\frac{1}{I_2})\sin\varphi\cos\varphi\frac{\partial}{\partial \varphi}.$$
We observe that for the axisymmetric case $I_1=I_2$, the dissipation takes place only in the angle coordinate $\theta$,
$${\bf v_0}_{|L_c}=2c\sin\theta\cos\theta(\frac{1}{I_3}-\frac{1}{I_2})
\frac{\partial}{\partial \theta}.$$

\section{Appendix}

Let $(M,g)$ be a Riemannian manifold. We will recall some standard formulas that we have used throughout this paper.

\noindent {\bf Ricci symbol}
\begin{equation}\label{Ricci}
    \epsilon_{i_1...i_r}=\left\{
                           \begin{array}{ll}
                             1, & \hbox{if $i_1,...,i_r$ are distinct, and they are an even permutation of $\{1,...,r\}$;} \\
                             -1, & \hbox{if $i_1,...,i_r$ are distinct, and they are an odd permutation of $\{1,...,r\}$;} \\
                             0, & \hbox{otherwise.}
                           \end{array}
                         \right.
\end{equation}
From the definition of the Ricci symbol, we have
\begin{equation}\label{q-sarit}
    \epsilon_{i_1...i_{r-1}q}=(-1)^{r-k-1}\epsilon_{i_1...i_kqi_{k+1}...i_{r-1}}
\end{equation}
{\bf Generalized Kroneker $\delta$-symbol}
\begin{equation}\label{Kroneker}
    \delta_{i_1...i_r}^{j_1...j_r}=\left\{
                           \begin{array}{ll}
                             1, & \hbox{if $i_1,...,i_r$ are distinct, and $\{j_1,...,j_r\}$ is an even permutation of $\{i_1,...,i_r\}$;} \\
                             -1, & \hbox{if $i_1,...,i_r$ are distinct, and$\{j_1,...,j_r\}$ is an odd permutation of $\{i_1,...,i_r\}$;} \\
                             0, & \hbox{otherwise.}
                           \end{array}
                         \right.
\end{equation}
Using the two definitions above, we obtain the equality
\begin{equation}\label{Ricci-Kroneker}
    \delta_{i_1...i_r}^{j_1...j_r}=\epsilon_{i_1...i_r}\epsilon_{j_1...j_r}.
\end{equation}
For a fixed $q$ we have
\begin{equation}\label{q-constant}
    \delta_{i_1...i_{r-1}q}^{j_1...j_{r-1}q}=\delta_{i_1...i_{r-1}}^{j_1...j_{r-1}}.
\end{equation}
If we do the summation after the index $q$ from $1$ to $n$, we obtain the formula
\begin{equation}\label{q-sumare}
    \delta_{i_1...i_{r-1}q}^{j_1...j_{r-1}q}=(n-r+1)\delta_{i_1...i_{r-1}}^{j_1...j_{r-1}},
\end{equation}
and more generally
\begin{equation}\label{q-sumare-factorial}
    \delta_{i_1...i_ri_{r+1}...i_p}^{j_1...j_ri_{r+1}...i_p}=\frac{(n-r)!}{(n-p)!}\delta_{i_1...i_r}^{j_1...j_r}.
\end{equation}
The determinant of an $r\times r$ matrix can be written using the formulas
\begin{equation}\label{determinant}
   \det\left(
         \begin{array}{ccc}
           a_{11} & ... & a_{1r} \\
           ... & ... & ... \\
           a_{r1} & ... & a_{rr} \\
         \end{array}
       \right)=\epsilon_{i_1...i_r}a_{1i_1}a_{2i_2}...a_{ri_r}
\end{equation}
and
\begin{equation}\label{determinant-amestecat}
    \epsilon_{j_1...j_r}\det\left(
         \begin{array}{ccc}
           a_{11} & ... & a_{1r} \\
           ... & ... & ... \\
           a_{r1} & ... & a_{rr} \\
         \end{array}
       \right)=a_{j_1i_1}a_{j_2i_2}...a_{j_ri_r}\epsilon_{i_1...i_r}.
\end{equation}
Using the notation \eqref{sigma} and developing the determinant, we obtain
\begin{equation}\label{sigma-developare}
    \det\Sigma_{(F_1,...,F_k)}^{(F_1,...,F_k)}=\frac{\partial F_1}{\partial x^{b_1}}...\frac{\partial F_k}{\partial x^{b_k}}g^{b_1s_1}...g^{b_ks_k}\det\left(
                                     \begin{array}{ccc}
                                       \frac{\partial F_1}{\partial x^{s_1}} & ... & \frac{\partial F_k}{\partial x^{s_1}} \\
                                       ... & ... & ... \\
                                       \frac{\partial F_1}{\partial x^{s_k}} & ... & \frac{\partial F_k}{\partial x^{s_k}} \\
                                     \end{array}
                                   \right).
\end{equation}
The formula for the Hodge star operator $\ast:\Omega^r(M)\rightarrow \Omega^{n-r}(M)$ in local coordinates is given by
\begin{equation}\label{Hodge-formula}
    \ast(dx^{i_1}\wedge ...\wedge dx^{i_r})=\frac{\sqrt{|g|}}{(n-r)!}g^{i_1j_1}...g^{i_rj_r}\epsilon_{j_1...j_rj_{r+1}...j_n}dx^{j_{r+1}}\wedge ...\wedge dx^{j_n},
\end{equation}
where $|g|$ is the determinant of the symmetric matrix associated to the Riemannian metric $g$.

\end{document}